\documentclass[12pt,reqno]{amsart}
\usepackage{amsmath,amssymb,amscd,graphicx}
\usepackage[all]{xy}
\usepackage[latin1]{inputenc}
\usepackage[T1]{fontenc}
\usepackage{hyperref}
\usepackage{ifthen}
\usepackage{fullpage}
\usepackage{subsupscripts}
\usepackage[margin=1in]{geometry}	
\geometry{letterpaper}                   		
\usepackage{url}
\usepackage{cite}
\usepackage{tikz}


\newboolean{workmode}
\setboolean{workmode}{false}


\newboolean{sections}
\setboolean{sections}{true}




\input xy
\xyoption{all}
\xyoption{2cell}
\UseComputerModernTips
\UseTwocells


\linespread{1.2}
\setlength{\parskip}{.5\baselineskip}



\makeatletter
\def\th@plain{%
  \thm@notefont{}
  \itshape 
}
\def\th@definition{%
  \thm@notefont{}
  \normalfont 
}
\makeatother



\newcommand{\calA}{\mathcal{A}}
\newcommand{\calB}{\mathcal{B}}

\newcommand{\calE}{\mathcal{E}}
\newcommand{\calF}{\mathcal{F}}
\newcommand{\calG}{\mathcal{G}}
\newcommand{\calH}{\mathcal{H}}


\newcommand{\CC}{{\mathbb{C}}}  
\newcommand{\KK}{{\mathbb{K}}}
\newcommand{\RR}{{\mathbb{R}}}  
\renewcommand{\SS}{{\mathbb{S}}} 
\newcommand{\ZZ}{{\mathbb{Z}}}  


\newcommand{\curv}{{\operatorname{curv}}}   
\newcommand{\Hom}{{\operatorname{Hom}}} 
\newcommand{\id}{{\operatorname{id}}}  
\newcommand{\im}{{\operatorname{im}}}  
\newcommand{\pr}{{\operatorname{pr}}} 


\DeclareMathOperator{\PU}{PU}  
\newcommand{\U}{{\operatorname{U}}}  


\newcommand{\Mfld}{\mathbf{Mfld}} 
\newcommand{\BiCat}{\mathbf{BiCat}} 


\newcommand{\del}{{\partial}} 
\newcommand{\toto}{{~\rightrightarrows~}} 
\DeclareMathOperator{\hhom}{2-Hom}  
\newcommand{\supth}{{^{\text{th}}}} 


\newcommand{\rotateRPY}[3]
{   \pgfmathsetmacro{\rollangle}{#1}
    \pgfmathsetmacro{\pitchangle}{#2}
    \pgfmathsetmacro{\yawangle}{#3}

    \pgfmathsetmacro{\newxx}{cos(\yawangle)*cos(\pitchangle)}
    \pgfmathsetmacro{\newxy}{sin(\yawangle)*cos(\pitchangle)}
    \pgfmathsetmacro{\newxz}{-sin(\pitchangle)}
    \path (\newxx,\newxy,\newxz);
    \pgfgetlastxy{\nxx}{\nxy};

    \pgfmathsetmacro{\newyx}{cos(\yawangle)*sin(\pitchangle)*sin(\rollangle)-sin(\yawangle)*cos(\rollangle)}
    \pgfmathsetmacro{\newyy}{sin(\yawangle)*sin(\pitchangle)*sin(\rollangle)+ cos(\yawangle)*cos(\rollangle)}
    \pgfmathsetmacro{\newyz}{cos(\pitchangle)*sin(\rollangle)}
    \path (\newyx,\newyy,\newyz);
    \pgfgetlastxy{\nyx}{\nyy};

    \pgfmathsetmacro{\newzx}{cos(\yawangle)*sin(\pitchangle)*cos(\rollangle)+ sin(\yawangle)*sin(\rollangle)}
    \pgfmathsetmacro{\newzy}{sin(\yawangle)*sin(\pitchangle)*cos(\rollangle)-cos(\yawangle)*sin(\rollangle)}
    \pgfmathsetmacro{\newzz}{cos(\pitchangle)*cos(\rollangle)}
    \path (\newzx,\newzy,\newzz);
    \pgfgetlastxy{\nzx}{\nzy};
}

\tikzset{RPY/.style={x={(\nxx,\nxy)},y={(\nyx,\nyy)},z={(\nzx,\nzy)}}}


\newcommand{\ifwork}[1]{\ifthenelse{\boolean{workmode}}{#1}{}}
\newcommand{\comment}[1]{}
\newcommand{\mute}[1]{}
\newcommand{\printname}[1]{}

\ifwork{
\renewcommand{\comment}[1]{{\marginpar{*}\ \scriptsize{#1}\ }}
\renewcommand{\mute}[1]{{\scriptsize \ #1\ }\marginpar{\scriptsize muted}}
\renewcommand{\printname}[1]
    {\smash{\makebox[0pt]{\hspace{-1.0in}\raisebox{8pt}{\tiny #1}}}}
}

\newcommand{\labell}[1] {\label{#1} \printname{#1}}


\makeatletter						
\newtheorem*{rep@theorem}{\rep@title}
\newcommand{\newreptheorem}[2]{
\newenvironment{rep#1}[1]{
\def\rep@title{#2 ##1}
\begin{rep@theorem}}
{\end{rep@theorem}}}
\makeatother

\newcommand{\ifsection}[2]{\ifthenelse{\boolean{sections}}{#1}{#2}}

\theoremstyle{plain}
\ifsection{
    \newtheorem{theorem}{Theorem}[section]
    

	\numberwithin{equation}{section}
	\numberwithin{figure}{section}
	\usepackage{chngcntr}
	\counterwithin{table}{section}
}
{
    \newtheorem{theorem}{Theorem}
}

\newreptheorem{theorem}{Theorem}		
\newreptheorem{proposition}{Proposition}
\newreptheorem{corollary}{Corollary}
\newtheorem{proposition}[theorem]{Proposition}

\newtheorem{corollary}[theorem]{Corollary}
\newtheorem{lemma}[theorem]{Lemma}

\theoremstyle{definition}
\newtheorem{definition}[theorem]{Definition}

\newtheorem{remark}[theorem]{Remark}




\newcommand{\calBBS}{{\mathcal{B}^2\mathcal{S}^1}} 
\newcommand{\opDC}{{\operatorname{DC}}} 
\newcommand{\DC}{\mathcal{DC}} 
\newcommand{\dashto}{{\;\dashrightarrow\;}} 
\newcommand{\triv}{{\operatorname{triv}}} 
\newcommand{\DiDo}{{\operatorname{DD}}} 
\newcommand{\frakX}{{\mathfrak{X}}}


\author{Derek Krepski}
\address{Dept.\ of Mathematics, Univ.\ of Manitoba, Winnipeg, Manitoba, Canada R3T 2N2}
\email{Derek.Krepski@umanitoba.ca}
\urladdr{\url{http://server.math.umanitoba.ca/~dkrepski/}}

\author{Jordan Watts}
\address{Dept.\ of Mathematics, Univ.\ of Colorado Boulder, Boulder, CO, USA 80309}
\email{jordan.watts@colorado.edu}
\urladdr{\url{http://euclid.colorado.edu/~jowa8403/}}

\title{Differential cocycles and Dixmier-Douady bundles}
\date{\today}

\thanks{DK is partially supported by an NSERC Discovery Grant.}

\keywords{differential character, differential cocycle, Dixmier-Douady bundle, Dixmier-Douady class, gerbe, stack. } 
\subjclass[2010]{Primary: 53C08; Secondary: 55R91, 22A22, 14D23}

\begin{document}

\begin{abstract}
This paper exhibits equivalences of $2$-stacks between certain models of $\SS^1$-gerbes and differential $3$-cocycles.  We focus primarily on the model of Dixmier-Douady bundles, and provide an equivalence   between the $2$-stack of Dixmier-Douady bundles and the 2-stack of differential $3$-cocycles of height $1$, where the `height' is related to the presence of connective structure.  
Differential $3$-cocycles of height $2$ (resp.\ height $3$) are shown to be equivalent to $\SS^1$-bundle gerbes with connection (resp.\ with connection and curving).  These equivalences extend to the equivariant setting of $\SS^1$-gerbes over Lie groupoids.
\end{abstract}

\maketitle

\section{Introduction and Preliminaries}\labell{s:intro}

 Originally due to Giraud \cite{giraud1971}, $\SS^1$-gerbes are `geometric models' representing cohomology classes in $H^3(M;\ZZ)$ of a smooth manifold  $M$; these geometric models are analogous to principal $\SS^1$-bundles over $M$, which by Weil's Theorem \cite{weil1952} represent cohomology classes in $H^2(M;\ZZ)$.  
 There are several concrete constructions for $\SS^1$-gerbes in the literature, and this paper focuses primarily on the model of Dixmier-Douady bundles (DD-bundles), which are locally trivial fibre bundles of $C^*$-algebras, with typical fibre $\KK(\calH)$, the compact operators on a separable complex Hilbert space $\calH$. Other concrete constructions for $\SS^1$-gerbes appearing in the literature are $\SS^1$-bundle gerbes (see \cite{murray1996bundle,hitchin2001lectures,chatterjee1998construction}), $\SS^1$-central extensions of Lie groupoids (see \cite{behrend2011differentiable}), and principal Lie 2-group bundles (see \cite{baez2007higher,wockel2011principal,nikolaus2013four}).

Over a fixed manifold $M$,  these constructions naturally result in bicategories.  In the case of  DD-bundles, 1-arrows or \emph{Morita isomorphisms} $\calE\colon\calA_1 \dashto \calA_2$ are Banach space bundles $\calE \to M$ of fibrewise $(\calA_2,\calA_1)$-bimodules, and 2-arrows $\tau\colon \calE_1 \Rightarrow \calE_2$ are Banach space bundle isomorphisms (see Section \ref{s:dd} for details).   Analogous to the (first) Chern class for principal $\SS^1$-bundles, we can associate to a DD-bundle $\calA\to M$ its Dixmier-Douady class (DD-class)
 $\DiDo(\calA) \in H^3(M;\ZZ)$, and by a theorem of Dixmier and Douady \cite{dixmier1963champs},  Morita isomorphism classes of DD-bundles are classified by their DD-class. The need to relax the notion of isomorphism of DD-bundles, from usual `structure-preserving' fibre bundle isomorphisms  to Morita isomorphisms, is related to the fact that two non-isomorphic (in the sense of fibre bundles) DD-bundles can have the same Dixmier-Douady class.  An alternate fix is to restrict to the case $\dim\calH=\infty$ (see \cite[Theorem 4.85]{raeburn1998morita}); however, there are naturally occurring examples of interest  with finite dimensional fibres (e.g.\ the Clifford algebra bundle of an even rank Euclidean vector bundle).  (For bundle gerbes, the situation is similar ---there are non-isomorphic bundle gerbes with the same DD-class, which is what motivated the definition of \emph{stable isomorphism} (see \cite{murray2000bundle,waldorf2007more}). In that setting, one also has 2-arrows, namely \emph{transformations} of stable isomorphisms to obtain a bicategory of bundle gerbes over a space.)  Additionally, since DD-bundles can be pulled back along smooth maps, we obtain a presheaf $\calBBS$ of bicategories over the category of smooth manifolds $\Mfld$, $M \mapsto \calBBS(M)$ (see Proposition \ref{p:preshfbicat}).

The local nature of the bicategory $\calBBS(M)$ of DD-bundles over $M$ is competently described in the language of stacks.
Roughly speaking, since DD-bundles are locally trivial fibre bundles, the bicategory $\calBBS(M)$ can  be reconstructed from the bicategories $\calBBS(U_\alpha)$, where $\{U_\alpha\}$ is any open cover of $M$. Such a reconstruction, however, should accommodate the more general notion of Morita isomorphism, which should ultimately be used to `glue' together DD-bundles $\calA_\alpha \to U_\alpha$ with (possibly non-isomorphic) fibres $\KK(\calH_\alpha)$.  This notion of gluing is made more precise in Theorem \ref{t:dd}, which states that  the presheaf $\calBBS$ of bicategories  is a 2-stack.  In more detail, following \cite{nikolaus2011equivariance},   introduce   the \emph{descent bicategory} $\calBBS(U_\bullet)$ associated to the cover $\{U_\alpha\}$ of $M$ (see Definition \ref{d:desc categ}), which naturally comes with a functor $\calBBS(M) \to \calBBS(U_\bullet)$ induced by restriction to the open sets in the cover.  That $\calBBS$ is a 2-stack means that this restriction functor is an equivalence of bicategories for every $M$ and every cover of $M$.  

This paper relates  DD-bundles to  \emph{differential $3$-cocycles of height $1$}, following ideas in \cite{lerman2008differential} that considered the case of principal $\SS^1$-bundles and differential $2$-cocycles. In \cite[Section 3.2]{hopkins2005quadratic}, Hopkins and Singer introduce a category of differential $k$-cocycles $\opDC_s^k$ (see Section \ref{ss:cochain}), where $s>0$ is an integer, which we shall refer to as the \emph{height}.  The cochain complexes $\opDC_s^*$   provide a kind of refinement of singular cohomology.  For example, the cohomology group $H^2(\opDC_s^*(M))$ classifies principal $\SS^1$-bundles over $M$  when $s=1$, and when $s=2$ it classifies principal $\SS^1$-bundles with connection (up to connection preserving isomorphism).  An important feature of this complex is that $H^k(\opDC_k^*(M))$ is isomorphic to the group of differential characters, due to Cheeger and Simons \cite{cheeger1985differential}. The perspective from\cite{lerman2008differential} adopted here views the cocycles in $\opDC_s^3(M)$ as objects of a 2-category $\DC_s^3(M)$, where by construction cohomology classes correspond precisely to isomorphism classes of objects (see Section \ref{ss:cochain}). In fact,  since cochains can be pulled back along smooth maps, we have a presheaf of 2-categories $\DC_s^3$, and Theorem \ref{t:dc} verifies that $\DC_s^3$ is a 2-stack, which we refer to as the 2-stack of differential $3$-cocycles of height $s$.  When $s=3$, we will follow \cite{lerman2008differential} and call $\DC_3^3$ the 2-stack of differential characters of degree 3.  Our first main result, Theorem \ref{t:dd-dc}, shows that there is an equivalence of 2-stacks $\calBBS \cong \DC_1^3$.

Theorem \ref{t:dd-dc} has some immediate consequences. First, for any manifold $M$ the equivalence $\calBBS(M) \cong \DC_1^3(M)$  results in a `strictification' of the bicategory of DD-bundles to the strict 2-category of differential cocycles, which can be useful in practice.  (For example, they were used in \cite{krepski2016groupoid} to verify the compatibility among certain definitions of prequantization in the context of Hamiltonian actions of quasi-symplectic/twisted presymplectic groupoids.) 

Second, the equivalence as 2-stacks provides an equivalence of equivariant objects as well.  That is, for any Lie groupoid $\Gamma_1 \toto \Gamma_0$, we may consider the bicategories of $\Gamma_\bullet$-equivariant DD-bundles $\calBBS(\Gamma_\bullet)$ (for an action groupoid $G\times M \toto M$, this is a weakening of the usual notion of $G$-equivariant DD-bundles) and $\Gamma_\bullet$-equivariant differential cocycles $\DC_1^3(\Gamma_\bullet)$ (see Section \ref{ss:equiv} and Definition \ref{d:eqDD}).  The equivalence of 2-stacks automatically gives the corresponding equivalence of equivariant objects $\calBBS(\Gamma_\bullet)\cong\DC_1^3(\Gamma_\bullet)$.  

Third, since the isomorphism classes of objects in the 2-categories $\DC_1^3(M)$ and $\DC_1^3(\Gamma_\bullet)$ are easily computed (by taking cohomology of the corresponding cochain complexes), we obtain the Dixmier-Douady classification of Morita isomorphism classes of DD-bundles over $M$ by $H^3(M;\ZZ)$ (see Corollary \ref{c:iso class}) and its equivariant counterpart, classifying Morita isomorphism classes of $\Gamma_\bullet$-equivariant DD-bundles by $H^3(\Gamma_\bullet;\ZZ)$ for proper Lie groupoids $\Gamma_1 \toto\Gamma_0$ (see Corollary \ref{c:iso class gpd}).  For compact Lie groups $G$, it is well known that $G$-equivariant DD-bundles are classified by $H^3_G(M;\ZZ)$.  An interesting consequence of Corollary \ref{c:iso class gpd} is that every $G\times M \toto M$ equivariant DD-bundle (a weaker notion that the usual notion of $G$-equivariance) is Morita equivalent to a genuine $G$-equivariant DD-bundle over $M$.

We also obtain refinements of Theorem \ref{t:dd-dc} in the setting of $\SS^1$-bundle gerbes with connective structures.
By making use of the technology in \cite{nikolaus2011equivariance} showing that $\SS^1$-bundle gerbes with connection and curving form a 2-stack $\mathrm{Grb}^{\nabla,B}$, our second main theorem (Theorem \ref{t:dcbundlegerbwconnex}) establishes a corresponding equivalence  $\mathrm{Grb}^{\nabla,B} \cong \DC_3^3$ with the 2-stack of differential characters of degree 3.  (Theorem \ref{t:dcbundlegrb} verifies the expected equivalences to differential $3$-cocycles for bundle gerbes without connections, $\mathrm{Grb} \cong \DC_1^3$, and with connections but no specified curving $\mathrm{Grb}^{\nabla} \cong \DC_2^3$.) 

Similar to the corollaries listed above, we immediately obtain the classification of stable isomorphism classes of $\SS^1$-bundle gerbes over $M$ with (or without) connection by $H^3(M;\ZZ)$ (see Corollary \ref{c:iso class grbcon}), and the classification of stable isomorphism classes of $\SS^1$-bundle gerbes with connection and curving by differential characters of degree 3, $H^3(\opDC_3^*(M))$ (see Corollary \ref{c:iso class diffchar}).  The equivariant versions state that  (Corollary \ref{c:iso class grb equiv}) stable isomorphism classes of $\Gamma_\bullet$-equivariant $\SS^1$-bundle gerbes are classified by $H^3(\Gamma_\bullet;\ZZ)$ (for proper Lie groupoids $\Gamma_1\toto \Gamma_0$);   (Corollary \ref{c:iso class grbcon equiv}) stable isomorphism classes of $\Gamma_\bullet$-equivariant bundle gerbes with connection   are classified by $H^3(\opDC_2^*(\Gamma_\bullet))$;  and (Corollary \ref{c:iso class grbconcurv equiv}) $\Gamma_\bullet$-equivariant bundle gerbes with connection and curving  are classified by differential characters $H^3(\opDC_3^*(\Gamma_\bullet))$.

\medskip

The paper is organized as follows. In the remainder of this section, we collect some preliminaries on the simplicial manifold $\Gamma_\bullet$ associated to a Lie groupoid $\Gamma_1 \toto \Gamma_0$, and recall some terminology related to presheaves of bicategories and 2-stacks.  

Section \ref{s:dc} recalls constructions and notation regarding differential cocycles and verifies in Theorem \ref{t:dc} that differential 3-cocycles form a 2-stack.  

In Section \ref{s:dd}, we review some definitions surrounding the bicategory of DD-bundles over a manifold, as well as their equivariant counterparts on a Lie groupoid.  We show in Theorem \ref{t:dd} that DD-bundles form a 2-stack.

Section \ref{s:dd2fun} contains the  main theorems of the paper.  Namely, this section contains Theorem \ref{t:dd-dc}, exhibiting the equivalence between DD-bundles and differential 3-cocycles of height 1, together with the Corollaries mentioned above.  At the end of this section, we establish the refinements of this result to $\SS^1$-bundle gerbes with connective structures mentioned above (Theorems \ref{t:dcbundlegrb} and \ref{t:dcbundlegerbwconnex}).

\noindent \emph{Acknowledgements.} We thank Eugene Lerman for many useful insights and conversations.

\subsection{Lie Groupoids}

We briefly recall some aspects related to the simplicial manifold $\Gamma_\bullet$ associated to a Lie groupoid $\Gamma_1 \rightrightarrows \Gamma_0$, as well as the resulting double complex arising from a presheaf of chain complexes.  Denote the source and target maps by $s,t\colon \Gamma_1 \to \Gamma_0$, respectively, and write multiplication $\Gamma_1 \times_{\Gamma_0} \Gamma_1 \to \Gamma_1$ as $(g_1,g_2)\mapsto g_1g_2$.

For $k \geq 2$, write 
\[
\Gamma_k = \underbrace{\Gamma_1 \times_{\Gamma_0} \Gamma_1 \times_{\Gamma_0} \cdots \times_{\Gamma_0} \Gamma_1}_{k \text{ factors}}
\]
whose elements are $k$-tuples $(g_1, \ldots, g_k)$ of composable arrows (with $s(g_i)=t(g_{i+1})$).  For $0 \leq i \leq k$, let $\partial_i\colon \Gamma_k \to \Gamma_{k-1}$ be the \emph{face maps} given by
\[
\partial_i(g_1, \ldots, g_k) = \left\{ 
\begin{array}{ll}
(g_2, \ldots, g_k) & \text{if }i=0 \\
(g_1, \ldots, g_ig_{i+1},\ldots, g_k) & \text{if }0<i<k \\
(g_1, \ldots, g_{k-1}) & \text{if }i=k. \\
\end{array}
\right.
\]
For convenience, we set $\partial_0=s$ and $\partial_1=t$ on $\Gamma_1$.  It is easily verified that the face maps satisfy the simplicial identities $\partial_i\partial_j = \partial_{j-1} \partial_i$ for $i<j$.  (We will not require degeneracy maps in this paper.)


Let $(A^*,d)$ denote a presheaf of cochain complexes, and consider the double complex $A^*(\Gamma_\bullet)$, depicted below.
\[
\xymatrix{
\vdots & \vdots & \vdots \\
A^2(\Gamma_0)	\ar[r]^\partial \ar[u]^{d} & A^2(\Gamma_1)\ar[r]^\partial \ar[u]^{-d} & A^2(\Gamma_2) \ar[r]^\partial \ar[u]^{d}	& \cdots	\\
A^1(\Gamma_0)	\ar[r]^\partial \ar[u]^{d} & A^1(\Gamma_1)\ar[r]^\partial \ar[u]^{-d} & A^1(\Gamma_2) \ar[r]^\partial \ar[u]^{d}	& \cdots	\\
A^0(\Gamma_0)	\ar[r]^\partial \ar[u]^{d} & A^0(\Gamma_1)\ar[r]^\partial \ar[u]^{-d} & A^0(\Gamma_2) \ar[r]^\partial \ar[u]^{d}	& \cdots	\\
}
\]
The horizontal differential is the alternating sum of pullbacks of face maps, $\partial = \sum (-1)^i \partial_i^*$.  Denote the total complex by 
\(
(A^*(\Gamma_\bullet))_\mathrm{tot}
\)
with $(A^*(\Gamma_\bullet))_\mathrm{tot}^n = \bigoplus_{p+q=n} A^p(\Gamma_q)$, and    total differential $\delta = (-1)^q d \oplus \partial$.

In this paper, we will use the de~Rham complex $\Omega^*$, smooth singular cochains $C^*(-;\ZZ)$ and $C^*(-;\RR)$, and a cochain complex of Hopkins and Singer\cite{hopkins2005quadratic}, denoted $\opDC_s^*$ following the notation in \cite{lerman2008differential} (reviewed in Section \ref{ss:dc}).  Note that we will abuse notation and use integration of forms to view $\Omega^*(M)\subset C^*(M;\RR)$ and also view $C^*(M;\ZZ) \subset C^*(M;\RR)$.

For an open cover $\{U_\alpha\}$ of a manifold $M$, write $U=\coprod_\alpha U_\alpha$ and let $\pi\colon U\to M$ be the natural map induced by inclusions of open sets. We denote the \v{C}ech groupoid $U\times_M U \toto U$ corresponding to the cover $\{U_\alpha\}$ by $U_\bullet$.

\subsection{Presheaves of Bicategories}\labell{ss:bicat}

We recall some details regarding a presheaf in bicategories.  (See \cite{borceaux1994handbook} for background on higher categories.)
Let $\BiCat$ denote the 3-category of bicategories, whose objects are weak 2-categories; 1-arrows are pseudo-functors; 2-arrows are pseudo-natural transformations; and 3-arrows are modifications.  (We shall often omit the prefix \textsl{pseudo} in the text; unless stated otherwise, functors and natural transformations are of the \textsl{pseudo} variety.)

\begin{definition}[Presheaf of bicategories]\labell{d:preshbicat}
A \textbf{presheaf of bicategories} (over manifolds) is a lax functor $\mathfrak{X}\colon \Mfld^{op} \to \BiCat$.  It consists of the following data:
\begin{enumerate}
\item for every manifold $T$, a bicategory $\mathfrak{X}(T)$;
\item for every map $f\colon S\to T$ , a functor $f^*\colon \mathfrak{X}(T) \to \mathfrak{X}(S)$;
\item\labell{item:nat} for every pair of composable maps $R\stackrel{f}{\longrightarrow} S\stackrel{g}{\longrightarrow} T$, a natural isomorphism 
$$
\phi_{f,g}\colon f^*\circ g^* \Rightarrow (gf)^*;
$$
\item\labell{item:mod} for every triple of composable maps $Q\stackrel{f}{\longrightarrow} R\stackrel{g}{\longrightarrow} S\stackrel{h}{\longrightarrow} T$, a modification $\theta=\theta_{f,g,h}$  between the composite natural transformations
$$
f^*g^*h^* \Rightarrow(gf)^*h^* \Rightarrow (hgf)^*
$$
and
$$
f^*g^*h^* \Rightarrow f^*(hg)^* \Rightarrow (hgf)^*.
$$

\end{enumerate}
The modifications $\theta$ are required to satisfy the following coherence condition.
For each quadruple of composable maps $P\stackrel{f}{\longrightarrow}Q\stackrel{g}{\longrightarrow} R\stackrel{h}{\longrightarrow} S\stackrel{k}{\longrightarrow} T$, the two induced modifications between the composite natural transformations,
\begin{equation}\labell{eq:fghk-R}
f^*g^*h^*k^* \Rightarrow f^*g^*(kh)^* \Rightarrow f^*(khg)^* \Rightarrow (khgf)^*,
\end{equation}
and 
\begin{equation}\labell{eq:fghk-L}
f^*g^*h^*k^* \Rightarrow (gf)^*h^*k^* \Rightarrow (hgf)^*k^* \Rightarrow (khgf)^*
\end{equation}
must coincide.
\end{definition}

To elaborate further, the natural transformation in \eqref{item:nat} of Definition \ref{d:preshbicat} above consists of a 1-isomorphism $\phi^A\colon f^*g^*A \to (gf)^*A$ in $\mathfrak{X}(R)$ for each object $A$ in $\mathfrak{X}(T)$ as well as 2-isomorphisms 
$\sigma_F\colon \phi^{A'} \circ f^*g^*F \Rightarrow   (gf)^*F \circ \phi^A$ 
for each 1-arrow $F\colon A\to A'$ in $\mathfrak{X}(T)$. The modifications in \eqref{item:mod} are given as follows.  For each object $A$ in $\mathfrak{X}(T)$ let $\alpha^A=\phi^A_{gf,h}\circ \phi^{h^*A}_{f,g}$  and $\beta^A=\phi^A_{f,hg} \circ f^*\phi^A_{g,h}$ be the 1-isomorphisms in $\mathfrak{X}(Q)$ given by the composite natural transformations in \eqref{item:mod}, and let 
$$
\mu_E\colon \alpha^{A'} \circ f^*g^*h^*E \Rightarrow (hgf)^*E \circ\alpha^A, \quad \nu_E\colon \beta^{A'} \circ f^*g^*h^*E \Rightarrow (hgf)^*E \circ\beta^A
$$
be the natural isomorphisms corresponding to a 1-arrow $E\colon A\to A'$.
The modification $\theta$ consists of 2-arrows $\theta(A)\colon \alpha^A \Rightarrow \beta^A$.  These are required to satisfy the following property.  
For any 2-arrow $\rho\colon E \Rightarrow E'$ between 1-arrows $E, E'\colon A\to A'$,we have the equality
\begin{equation}\labell{eq:mod}
\nu_{E'} \circ (\theta(A') \star f^*g^*h^*\rho) = ((hgf)^*\rho \star \theta(A)) \circ \mu_E, 
\end{equation}
where $\star$ denotes `horizontal' composition of 2-arrows.

The coherence condition on the modifications can be stated as follows.  For any object $A$ in $\mathfrak{X}(T)$, the modifications $\theta$  result in the following two 2-cells from \eqref{eq:fghk-L} to \eqref{eq:fghk-R}: namely, the composition
\begin{equation*} 
\begin{gathered}
\xymatrix@C=1em{
&(gf)^*h^*k^*A \ar[rr] \ar[dr]	\rrtwocell<\omit>{<4>\theta}&	& (hgf)^*k^*A \ar[dr]	 &	\\
f^*g^*h^*k^*A  \ar[ur] \ar[dr]\rrtwocell<\omit>{<0>\sigma}  &	&(gf)^*(kh)^*A   \rtwocell<\omit>{<4>\theta}\ar[rr] & 	& (khgf)^*A \\
	& f^*g^*(kh)^*A \ar[rr] \ar[ur] & &  f^*(khg)^*A \ar[ur] & 
}
\end{gathered}
\end{equation*}
(where $\sigma$ denotes the 2-isomorphism $\sigma_{\phi^A}$) and the composition
\begin{equation*} 
\begin{gathered}
\xymatrix@C=1em{
&(gf)^*h^*k^*A \ar[rr]	\rtwocell<\omit>{<4>\theta}&	& (hgf)^*k^*A \ar[dr]	&	\\
f^*g^*h^*k^*A \ar[rr] \ar[ur] \ar[dr]\rrtwocell<\omit>{<4>{\,\,\,\,\,\,f^*\theta}}  &	&f^*(hg)^*k^*A \ar[dr] \ar[ur] \rrtwocell<\omit>{<0>\theta} & 	& (khgf)^*A \\
	& f^*g^*(hk)^*A \ar[rr] & &  f^*(khg)^*A \ar[ur] & 
}
\end{gathered}
\end{equation*}
(where he have omitted the subscripts on each modification $\theta$).  These 2-cells must agree for every $A$.


\subsection{Equivariant Objects in a Presheaf} \labell{ss:equiv}

Recall the construction from \cite{nikolaus2011equivariance} of equivariant objects in a presheaf of bicategories.

\begin{definition} \labell{d:eqobj}
Let $\mathfrak{X}$ be a presheaf of bicategories over manifolds, and let $\Gamma_1 \rightrightarrows \Gamma_0$ be a Lie groupoid.
The bicategory $\mathfrak{X}(\Gamma_\bullet)$ of \textbf{$\Gamma_\bullet$-equivariant objects of $\mathfrak{X}$} is given by the following:
\begin{enumerate}
\item objects consist of triples $(A, E, \tau)$ where $A$ is an object in $\mathfrak{X}(\Gamma_0)$; $E\colon \partial_0^*A \to \partial_1^*A$ is a 1-isomorphism in $\mathfrak{X}(\Gamma_1)$; and $\tau\colon \partial_2^*E \circ \partial_0^*E \Rightarrow \partial_1^*E$ is a 2-isomorphism in $\mathfrak{X}(\Gamma_2)$ satisfying the coherence condition $\partial_2^*\tau \circ (\id\star \partial_0^*\tau) =\partial_1^*\tau \circ (\partial_3^*\tau \star \id)$ in $\mathfrak{X}(\Gamma_3)$;
\item\labell{ditem:1arrow} 1-arrows $(F,\alpha)\colon (A,E,\tau) \to (A', E', \tau')$ consist of a 1-arrow $F\colon A\to A'$ in $\mathfrak{X}(\Gamma_0)$;
and a 2-arrow $\alpha\colon E' \circ \partial_0^*F \Rightarrow \partial_1^*F \circ E$ in $\mathfrak{X}(\Gamma_1)$ satisfying 
\[
(\id \star \tau) \circ (\partial_2^* \alpha \star \id) \circ (\id \star \partial^*_0 \alpha) = \partial_1^*\alpha \circ (\tau' \star \id)
\]
in $\mathfrak{X}(\Gamma_2)$;
\item\labell{ditem:2arrow} 2-arrows $(F,\alpha) \Rightarrow (F',\alpha')$ consist of a 2-arrow $\beta\colon F\Rightarrow F'$ in $\mathfrak{X}(\Gamma_0)$ satisfying $\alpha' \circ (\id \star \partial_0^*\beta) = (\partial_1^*\beta \star \id) \circ \alpha$ in $\mathfrak{X}(\Gamma_1)$.
\end{enumerate}
\end{definition}

\begin{remark}\labell{r:simpids}
Definition \ref{d:eqobj} implicitly makes use of the simplicial identities on the simplicial manifold $\Gamma_\bullet$ associated to the Lie groupoid $\Gamma_1 \toto \Gamma_0$.  For example, since $\partial_i \circ \partial_j = \partial_{j-1} \circ \partial_i$ ($i<j$),  Definition \ref{d:preshbicat} \eqref{item:nat} gives natural isomorphisms $\chi_{ij}\colon \partial_j^* \circ \partial_i^* \Rightarrow \partial_{i}^* \circ \partial_{j-1}^*$.  A priori, the 2-isomorphism $\tau$ in Definition \ref{d:eqobj} is not well-defined and should instead be written more precisely as a 2-isomorphism $\tau\colon \chi_{12}^A\circ \partial_2^*E \circ (\chi_{02}^A)^{-1} \circ \partial_0^*E \Rightarrow \partial_1^*E \circ (\chi_{01}^A)^{-1}$.  Throughout this paper, we will freely make use of simplicial identities and suppress the resulting 1-isomorhisms $\chi_{ij}$, as in the above definition.
\end{remark}

\subsection{2-Stacks}\labell{ss:stacks}

We briefly recall some notions related to 2-stacks.  For further details, the reader may wish to consult \cite{nikolaus2011equivariance,breen1994classification}.

\begin{definition}\labell{d:desc categ} \cite[Def.~2.12]{nikolaus2011equivariance}
Let $\frakX$ be a presheaf of bicategories over manifolds. 
\begin{enumerate}
	\item  Let $M$ be a manifold.  Given an open cover $\{U_\alpha\}$ of $M$, the \textbf{descent bicategory of $M$ corresponding to the cover $\{U_\alpha \}$} is the bicategory $\frakX(U_\bullet)$ of $U_\bullet$-equivariant objects of $\frakX$.
	\item We say $\frakX$ is a \textbf{2-prestack} (or simply, prestack) if for every manifold $M$ and every open cover $\{U_\alpha\}$ of $M$, the natural restriction functor $\pi^*\colon \frakX(M) \to \frakX(U_\bullet)$ is fully faithful.  
	\item We say $\frakX$ is a \textbf{2-stack} (or simply, stack) if for every manifold $M$ and every open cover $\{U_\alpha\}$ of $M$, the natural restriction functor $\pi^*\colon \frakX(M) \to \frakX(U_\bullet)$ is an equivalence of bicategories.  
\end{enumerate}
\end{definition}

Given a prestack $\frakX_0$, one can associate to it a \textbf{stackification} (see \cite[Section 1.10]{breen1994classification}), which is a stack $\frakX$ together with a morphism (a pseudo-natural transformation) $F\colon \frakX_0 \to \frakX$ such that (\textit{i}) for any $M$, the functor $F_M\colon \frakX_0(M) \to \frakX(M)$ is fully faithful (i.e.~an equivalence on $\Hom$ categories), and (\textit{ii}) every object in $\frakX(M)$ is locally isomorphic to one in the image of $\frakX_0(M)$ (i.e.~for every object $A$ in $\frakX(M)$, there exists a cover $\pi\colon U\to M$ and an object $A_0$ in $\frakX_0(U)$ together with an isomorphism $F_M(A_0) \to \pi^*A$ in $\frakX(U)$). In \cite[Section 3]{nikolaus2011equivariance}, the authors provide a concrete construction for a stackification, called the \emph{plus construction} of $\frakX_0$.

\section{Differential Cocycles }\labell{s:dc}

This section recalls the construction of the (2-)category of \emph{differential cocycles}  from \cite{hopkins2005quadratic}, and establishes some properties analogous to those in \cite{lerman2008differential}, adapted to 2-stacks and differential cocycles of degree 3.  Specifically, differential cocycles of degree 3 are constructed as a presheaf of bicategories (strict 2-groupoids, in fact) associated to a certain presheaf of cochain complexes.  We briefly review the more general construction of
\emph{cocycle 2-categories} associated to any presheaf of cochain complexes in Section \ref{ss:cochain}, turning our attention to the case of differential cocycles in Section \ref{ss:dc}, where we show in Theorem \ref{t:dc} that differential cocycles form a 2-stack.

\subsection{Cocycle $2$-Categories}\labell{ss:cochain}

Following \cite{lerman2008differential}, but adapting to the setting of $2$-categories, we review the construction of a $2$-category from a cochain complex $(A^*,d)$.  In this paper, we will assume all cochain complexes are concentrated in non-negative degrees (i.e.~$A^n=0$ for all $n<0$).

\begin{definition}
\labell{d:cochain}
Let $(A^*,d)$ be a cochain complex of abelian groups.  Fix an integer $k\geq 0$. Define the \textbf{cocycle $2$-category} $\calH^k(A^*)$ as follows:
\begin{enumerate}
\addtocounter{enumi}{-1}
\item objects are $k$-cocycles: $c\in A^k$ such that $dc=0$,
\item a $1$-arrow $c_1\to c_2$ is a $(k-1)$-cochain $b$ such that $c_1-c_2=db$,
\item a $2$-arrow $b_1\Rightarrow b_2$ between two $1$-arrows $b_1,b_2\colon c_1\to c_2$ is an equivalence class $[a]$ of $(k-2)$-cochains such that $b_2-b_1=da$, where $a_1$ is equivalent to $a_2$ if there is a $(k-3)$-cochain $z$ such that $a_2-a_1=dz$.
\end{enumerate}
Composition is given by addition of cochains, and the identity is the $0$-cochain.  It follows that $\calH^k(A^*)$ is a strict $2$-groupoid.
\end{definition}

\begin{remark} \labell{r:basicprops}
Analogous to some of the properties of cochain categories listed in \cite[Section 3.1]{lerman2008differential}, we note that isomorphism classes of objects in $\calH^k(A^*)$ are in one-to-one correspondence with $H^k(A^*)$, and that the automorphism category of any object in $\calH^k(A^*)$ is the cochain category $\calH^{k-1}(A^*)$. 
\end{remark}

\begin{remark}\labell{r:trunc}
Note that the cocycle 2-category $\calH^k(\tau_{k-2}A^*)$ obtained by replacing $A^*$ above with its good truncation 
$$
\tau_{k-2}A^n = \left\{
\begin{array}{rl}
 A^n 			& \text{if }n>k-2 \\
A^{k-2}/\im \, d 	& \text{if }n=k-2 \\
0 			& \text{if }n<k-2.
\end{array}
\right.
$$
is identical to $\calH^k(A^*)$.
\end{remark}

%

This construction behaves well with respect to morphisms of cochain complexes.  In particular, a morphism of cochain complexes $f\colon(A^*,d_A)\to(B^*,d_B)$ naturally induces a 2-functor $\calH^k(f)\colon\calH^k(A^*)\to\calH^k(B^*)$, and a cochain homotopy $s\colon A^* \to B^{*-1}$ between cochain maps $f$ and $g$ induces a pseudo-natural transformation  $\calH^k(s)\colon\calH^k(f)\Rightarrow\calH^k(g)$.

\begin{lemma}\labell{l:presheaf}
Let $(A^*(-),d)$ be a presheaf of complexes of abelian groups over the category $\Mfld$.  Then the assignment $M\mapsto\calH^k(A^*(M))$ is a presheaf of strict 2-groupoids.
\end{lemma}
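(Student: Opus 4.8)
The plan is to exhibit the assignment $M \mapsto \calH^k(A^*(M))$, together with the functors induced by pullback, as a \emph{strict} presheaf of bicategories in the sense of Definition \ref{d:preshbicat}, with all higher coherence data equal to identities. The guiding observation is that $(A^*(-),d)$ is a genuine (strict) presheaf of cochain complexes, i.e.\ a functor $\Mfld^{op}\to$ (cochain complexes) with $(gf)^* = f^*\circ g^*$ on the nose; so once I know that the construction $\calH^k$ of Definition \ref{d:cochain} is \emph{strictly} functorial on cochain maps, I can simply compose the two functors and all the comparison data will be trivial.

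First I would record the data. For each manifold $M$, Definition \ref{d:cochain} already produces a strict $2$-groupoid $\calH^k(A^*(M))$, supplying the object assignment of Definition \ref{d:preshbicat}. For a smooth map $f\colon S\to T$, the induced cochain map $f^*\colon A^*(T)\to A^*(S)$ induces, as recalled just before the lemma, a $2$-functor $\calH^k(f^*)\colon \calH^k(A^*(T))\to \calH^k(A^*(S))$, giving the functor data; explicitly it sends a $k$-cocycle $c$ to $f^*c$, a $1$-arrow $b$ to $f^*b$, and a $2$-arrow $[a]$ to $[f^*a]$, all well defined since $f^*$ commutes with $d$.

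The key step is that $\calH^k(-)$ preserves composition and identities of cochain maps \emph{strictly}. Because composition of $1$- and $2$-arrows in each $\calH^k(A^*(M))$ is addition of cochains, and a cochain map is in particular a homomorphism of abelian groups, every $\calH^k(f^*)$ preserves composition on the nose, so it is a strict (not merely pseudo) $2$-functor. Moreover, for composable maps $R\stackrel{f}{\longrightarrow} S\stackrel{g}{\longrightarrow} T$ the equality of cochain maps $(gf)^* = f^*\circ g^*$ yields, upon applying $\calH^k$ to objects, $1$-arrows and $2$-arrows, the strict equality of $2$-functors $\calH^k((gf)^*) = \calH^k(f^*)\circ \calH^k(g^*)$, and likewise $\calH^k(\id^*)=\id$.

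Finally I would assemble the presheaf. These equalities let me take every natural isomorphism $\phi_{f,g}$ of Definition \ref{d:preshbicat} \eqref{item:nat} and every modification $\theta_{f,g,h}$ of \eqref{item:mod} to be the identity. With all coherence data the identity, the compatibility \eqref{eq:mod} and the required equality of the two $2$-cells relating \eqref{eq:fghk-L} and \eqref{eq:fghk-R} hold trivially, so $M\mapsto \calH^k(A^*(M))$ is a presheaf of bicategories whose values are strict $2$-groupoids, as claimed. There is no genuine obstacle here: the only point demanding care is the verification of \emph{strict} functoriality of $\calH^k$, which rests precisely on the facts that composition in the cocycle $2$-category is addition and that cochain maps are additive, so that no nontrivial comparison isomorphisms are ever needed.
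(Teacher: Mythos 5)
Your proposal is correct and follows essentially the same route as the paper's own proof: exploit the fact that $(gf)^* = f^*\circ g^*$ holds on the nose for cochain maps, so that all natural transformations and modifications in Definition \ref{d:preshbicat} can be taken to be identities. Your explicit verification that $\calH^k(f^*)$ is a \emph{strict} $2$-functor (because composition in the cocycle $2$-category is addition of cochains and pullback is additive) merely spells out a point the paper leaves implicit.
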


\begin{proof}
Fix a presheaf of cochain complexes $(A^*(-),d)$ over $\Mfld$.  Then we already have that $\calH^k(A^*(-))$ is a strict 2-groupoid.  For a smooth map $f\colon M\to N$  of manifolds,   we obtain the pullback map $f^*\colon\calH^k(A^*(N))\to\calH^k(A^*(M))$. And for a pair of composable maps $M\stackrel{f}{\longrightarrow} N\stackrel{g}{\longrightarrow} P$,   we have $f^*g^*=(g\circ f)^*\colon A^*(P)\to A^*(M)$ on cochains; therefore, we have trivial natural transformations and, in turn,  trivial modifications.  
\end{proof}

Let $k\geq 0$. Given a presheaf of cochain complexes $(A^*,d)$ and a Lie groupoid $\Gamma_1 \toto \Gamma_0$, consider the bicategory $\calH^k(A^*(\Gamma_\bullet))$ of $\Gamma_\bullet$-equivariant objects of $\calH^k(A^*(-))$.  It is straightforward to verify that there is an isomorphism of bicategories $\calH^k(A^*(\Gamma_\bullet)) \cong \calH^k((\tau_{k-2}{A}^*(\Gamma_\bullet))_\mathrm{tot})$, where $\tau_{k-2}A^*$ is the good truncation of $A^*$ at $k-2$ (cf. Remark \ref{r:trunc}).
In subsequent sections of this paper, we will be particularly interested in the case $k=3$.  If the $0\supth$ cohomology of the presheaf of complexes vanishes identically, a straightforward argument shows the equivalence of bicategories $\calH^3(A^*(\Gamma_\bullet)) \cong \calH^3((\tau_{1}{A}^*(\Gamma_\bullet))_\mathrm{tot})$ can be improved:

\begin{proposition} \labell{p:equiv doublecpx}
Let $(A^*(-),d)$ be a presheaf of cochain complexes on $\Mfld$.  Suppose that  $H^0(A^*(M))=0$ for all manifolds $M$.  Then for any Lie groupoid $\Gamma_1 \toto \Gamma_0$, the bicategory $\calH^3(A^*(\Gamma_\bullet))$ of $\Gamma_\bullet$-equivariant objects of $\calH^3(A^*(-))$ is equivalent to the cocycle 2-category $\calH^3((A^*(\Gamma_\bullet))_\mathrm{tot})$.
\end{proposition}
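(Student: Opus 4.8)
The plan is to factor the desired equivalence through the good truncation. Recall from the discussion immediately preceding the proposition the isomorphism of bicategories $\calH^3(A^*(\Gamma_\bullet)) \cong \calH^3((\tau_1 A^*(\Gamma_\bullet))_\mathrm{tot})$. It therefore suffices to produce an equivalence $\calH^3((\tau_1 A^*(\Gamma_\bullet))_\mathrm{tot}) \simeq \calH^3((A^*(\Gamma_\bullet))_\mathrm{tot})$, and for this I would use the good-truncation quotient chain map $\pi\colon A^* \to \tau_1 A^*$. Since $\pi$ is natural, it induces a morphism of the associated double complexes over $\Gamma_\bullet$, hence a chain map $\pi_\mathrm{tot}\colon (A^*(\Gamma_\bullet))_\mathrm{tot} \to (\tau_1 A^*(\Gamma_\bullet))_\mathrm{tot}$ of total complexes, which in turn induces a $2$-functor $\calH^3(\pi_\mathrm{tot})$ as recalled in Section \ref{ss:cochain}. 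The goal is to show $\calH^3(\pi_\mathrm{tot})$ is an equivalence of bicategories.

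First I would show that $\pi_\mathrm{tot}$ is a quasi-isomorphism. For each fixed $q$, the vertical map $A^*(\Gamma_q) \to \tau_1(A^*(\Gamma_q))$ is the good-truncation quotient; it induces an isomorphism on $H^p$ for every $p \geq 1$ and the map $H^0(A^*(\Gamma_q)) \to 0$ in degree $0$. Because each $\Gamma_q$ is a manifold, the hypothesis gives $H^0(A^*(\Gamma_q)) = 0$, so this vertical map is in fact a quasi-isomorphism in all degrees. Filtering the (first-quadrant, hence convergent) double complex by the simplicial $q$-direction and taking vertical cohomology first, $\pi_\mathrm{tot}$ induces an isomorphism on the $E_1$-pages of the two spectral sequences; by the comparison theorem it is then an isomorphism on $E_\infty$, and hence on total cohomology. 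This is precisely the step that consumes the hypothesis: without $H^0(A^*(M))=0$ the terms $E_1^{0,q} = H^0(A^*(\Gamma_q))$ are destroyed by truncation and can alter $H^n_\mathrm{tot}$ for $n = 1,2,3$, breaking the conclusion.

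It remains to promote the quasi-isomorphism $\pi_\mathrm{tot}$ to an equivalence of cocycle $2$-categories, which I expect to be the main obstacle. Writing $C = (A^*(\Gamma_\bullet))_\mathrm{tot}$ and $D = (\tau_1 A^*(\Gamma_\bullet))_\mathrm{tot}$, I would verify that $\calH^3(\pi_\mathrm{tot})$ is essentially surjective and restricts to an equivalence on each hom-groupoid. By Remark \ref{r:basicprops}, isomorphism classes of objects of $\calH^3(C)$ are classified by $H^3(C)$, so essential surjectivity follows from surjectivity of $H^3(\pi_\mathrm{tot})$. For the local condition I would describe $\Hom_{\calH^3(C)}(c_1,c_2)$ explicitly from Definition \ref{d:cochain}: it is empty unless $[c_1]=[c_2]$ in $H^3(C)$, in which case (by Remark \ref{r:basicprops}, applied to the automorphism category) its set of components is a torsor over $H^2(C)$ and the automorphism group of each object is $H^1(C)$. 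Since $\pi_\mathrm{tot}$ induces isomorphisms on $H^1$, $H^2$, and $H^3$, injectivity on $H^3$ matches the emptiness condition on both sides, the isomorphism on $H^2$ makes the induced map on components a bijection of torsors, and the isomorphism on $H^1$ identifies automorphism groups; hence $\calH^3(\pi_\mathrm{tot})$ is an equivalence on hom-groupoids. Together with essential surjectivity this gives the equivalence, and composing with the isomorphism recalled at the outset yields $\calH^3(A^*(\Gamma_\bullet)) \simeq \calH^3((A^*(\Gamma_\bullet))_\mathrm{tot})$. The delicate part is this last paragraph: extracting the homotopical content ($\pi_0$ and $\pi_1$ of the hom-groupoids) from the cochain-level data and checking that isomorphisms on $H^1,H^2,H^3$ alone suffice, with $H^3$-injectivity governing emptiness and the $H^2$-torsor bookkeeping governing essential surjectivity on $1$-arrows; the quasi-isomorphism and spectral-sequence steps are routine by comparison.
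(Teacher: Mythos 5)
Your proposal is correct, and it is essentially the argument the paper has in mind: the paper states Proposition \ref{p:equiv doublecpx} without proof, presenting it as a ``straightforward argument'' that improves the isomorphism $\calH^3(A^*(\Gamma_\bullet)) \cong \calH^3((\tau_{1}A^*(\Gamma_\bullet))_\mathrm{tot})$ under the hypothesis $H^0(A^*(M))=0$, and your truncation-quotient map, the quasi-isomorphism step (via the column-filtration spectral sequence, or equivalently the acyclic-assembly-style reasoning the paper itself uses in Theorem \ref{t:dc}), and the $H^1$/$H^2$/$H^3$ torsor bookkeeping for hom-groupoids supply exactly that missing argument. In particular, your key lemma --- that a chain map inducing isomorphisms on $H^1$, $H^2$, and $H^3$ gives a biequivalence of cocycle $2$-categories --- is the same principle the paper invokes implicitly when it deduces Theorem \ref{t:dc} from the exactness of the double complex \eqref{e:equiv3}.
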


\subsection{Differentical Cocycles as a $2$-Stack}\labell{ss:dc}

In this section, we review the construction of the  complex of differential cochains on a manifold and the resulting 2-category of differential cocycles (see \cite{hopkins2005quadratic,lerman2008differential}), paying special attention to differential cocycles of degree 3.  In Theorem \ref{t:dc}, we show that the corresponding  presheaf of cocycle 2-categories forms a 2-stack, extending the treatment  in \cite{lerman2008differential} of the degree 2 case.

\begin{definition}
\labell{d:dc}
  Fix an integer $s>0$. Let  $M$ be a manifold.  The complex of \textbf{differential cochains} of $M$ of \textbf{height} $s$, denoted $\opDC^*_s(M)$, is defined as
$$
\opDC^k_s(M):=\{(c,h,\omega)\in C^k(M;\ZZ)\times C^{k-1}(M;\RR)\times\Omega^k(M)\mid \omega=0 \text{ if } k<s\},
$$  
with differential $d\colon\opDC^k_s(M)\to\opDC^{k+1}_s(M)$ given  by 
$$
d(c,h,\omega):=(dc,\omega-c-dh,d\omega).
$$
\end{definition}

Then $\opDC_s^*$ defines a presheaf of cochain complexes on $\Mfld$, and we let $\DC_s^k$ denote the presheaf of bicategories, $\DC^k_s(M):=\calH^k(\opDC^\bullet_s(M))$, of the resulting cocycle 2-category.  Using  the Theorem below, we call $\DC_s^3$ the \emph{$2$-stack of differential cocycles of degree $3$ and height $s$}.

\begin{theorem}
\labell{t:dc}
Let $s>0$.
The presheaf of differential 3-cocycles $\DC^3_s$ is a $2$-stack over $\Mfld$.
\end{theorem}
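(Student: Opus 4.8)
The plan is to reduce the assertion that $\pi^*\colon \DC_s^3(M)\to\DC_s^3(U_\bullet)$ is an equivalence of bicategories (Definition \ref{d:desc categ}) to a purely homological statement about a single cochain map. First, $H^0(\opDC_s^*(M))=0$ for every manifold $M$: indeed $\opDC_s^0(M)\cong C^0(M;\ZZ)$ and $d(c,0,0)=(dc,-c,0)$, so the only $0$-cocycle is $0$. Hence Proposition \ref{p:equiv doublecpx} applies with $\Gamma_\bullet=U_\bullet$ the \v{C}ech groupoid, giving an equivalence $\DC_s^3(U_\bullet)=\calH^3(\opDC_s^*(U_\bullet))\simeq\calH^3\big((\opDC_s^*(U_\bullet))_{\mathrm{tot}}\big)$. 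Under this equivalence the restriction functor is identified with $\calH^3$ applied to the augmentation cochain map $\pi^*\colon \opDC_s^*(M)\to (\opDC_s^*(U_\bullet))_{\mathrm{tot}}$, so it suffices to show $\calH^3(\pi^*)$ is an equivalence.

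Next I would record the general criterion that, for a morphism of cochain complexes $f\colon A^*\to B^*$, the induced $2$-functor $\calH^3(f)$ is an equivalence of bicategories whenever $f$ induces isomorphisms on $H^1$, $H^2$ and $H^3$. This follows by unwinding Definition \ref{d:cochain} together with Remark \ref{r:basicprops}: essential surjectivity on objects, together with the matching of empty and nonempty $\Hom$-categories, is controlled by $H^3(f)$; essential surjectivity of the functors on $\Hom$-categories (surjectivity on $1$-arrows up to $2$-isomorphism) is controlled by $H^2(f)$, since the $2$-isomorphism classes of $1$-arrows between fixed objects form a torsor over $H^2$; and full faithfulness on $2$-arrows is controlled by $H^1(f)$, since the $2$-arrows between fixed $1$-arrows form a torsor over $H^1$ (this is the identification of automorphism categories in Remark \ref{r:basicprops}). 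This is the $2$-categorical analogue of the degree-$2$ computation in \cite{lerman2008differential}. The theorem is thus reduced to showing that $\pi^*$ induces isomorphisms $H^j(\opDC_s^*(M))\to H^j\big((\opDC_s^*(U_\bullet))_{\mathrm{tot}}\big)$ for $j=1,2,3$.

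To establish this I would exploit the mapping-cone description of the differential cochain complex: $\opDC_s^*$ is the mapping cone of the morphism $\phi\colon C^*(-;\ZZ)\oplus\Omega^*_{\geq s}\to C^*(-;\RR)$, $(c,\omega)\mapsto\omega-c$, where $\Omega^*_{\geq s}$ denotes the de Rham forms placed in degrees $\geq s$ (and $0$ below). Passing from a presheaf of complexes to the total complex of its \v{C}ech double complex over $U_\bullet$ is additive and exact, hence commutes with mapping cones; comparing the long exact sequences of the two cones via the five lemma, it is enough to prove that the augmentation $\pi^*$ is a quasi-isomorphism separately for $\Omega^*_{\geq s}$, for $C^*(-;\ZZ)$, and for $C^*(-;\RR)$. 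For the de Rham summand this is immediate: in each fixed degree $\Omega^k$ is a fine sheaf, so the augmented \v{C}ech rows $0\to\Omega^k(M)\to\prod_\alpha\Omega^k(U_\alpha)\to\cdots$ are exact, and the usual double-complex argument yields the quasi-isomorphism. For the singular summands one invokes the classical \v{C}ech--singular comparison: although the rows are no longer exact, barycentric subdivision identifies $C^*(M;R)$ with the complex of $U_\bullet$-small cochains up to quasi-isomorphism, and for the latter the generalized Mayer--Vietoris sequence supplies exact rows, so the augmentation is again a quasi-isomorphism for $R=\ZZ$ and $R=\RR$.

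I expect the main obstacle to lie in this last, singular-cochain, step: because $C^*(-;R)$ is only a presheaf and not a sheaf, the rows of its \v{C}ech double complex fail to be exact, so the clean exact-rows argument available for forms must be replaced by the barycentric-subdivision comparison with $U_\bullet$-small cochains. A secondary point demanding care is the bookkeeping of the first step---checking that the restriction functor really does correspond to $\calH^3(\pi^*)$ under the equivalence of Proposition \ref{p:equiv doublecpx}, and that the comparison map $\pi^*$ is sufficiently natural for the five-lemma ladder of the third step to commute. With these in hand, the three steps combine to show that $\pi^*\colon\DC_s^3(M)\to\DC_s^3(U_\bullet)$ is an equivalence for every $M$ and every open cover, that is, $\DC_s^3$ is a $2$-stack.
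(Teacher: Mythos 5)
Your proof is correct, and its first half coincides with the paper's: both arguments use Proposition \ref{p:equiv doublecpx} (the paper first truncates via Remark \ref{r:trunc}) to identify $\DC_s^3(U_\bullet)$ with the cocycle $2$-category of the total \v{C}ech complex, thereby reducing the theorem to the statement that the augmentation $\pi^*\colon \opDC_s^*(M)\to(\opDC_s^*(U_\bullet))_{\mathrm{tot}}$ is a quasi-isomorphism in degrees $1,2,3$; your explicit criterion (isomorphisms on $H^1,H^2,H^3$ yield an equivalence of cocycle $2$-categories, with $H^3$ controlling objects, $H^2$ the $1$-arrows, $H^1$ the $2$-arrows) is precisely the unwinding the paper leaves implicit in the phrase ``follows directly from the triviality of the cohomology of the double complex,'' and your identification of restriction with $\calH^3(\pi^*)$ is the bookkeeping you rightly flag. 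Where you genuinely diverge is the homological core. The paper forms the augmented double complex \eqref{e:equiv3} and finishes in one line with the acyclic assembly lemma, citing \cite{lerman2008differential} for exactness of the rows. You instead split $\opDC_s^*$ as the Hopkins--Singer mapping cone of $(c,\omega)\mapsto\omega-c$, note that \v{C}ech totalization commutes with cones, prove the quasi-isomorphism separately for $\Omega^*_{\geq s}$ (fine sheaf, hence exact augmented rows) and for $C^*(-;\ZZ)$ and $C^*(-;\RR)$ ($U$-small cochains plus barycentric subdivision), and conclude by the five lemma. Your route is longer but self-contained, and it confronts head-on the one genuinely delicate point: since singular cochains form only a presheaf, the augmented rows as drawn in \eqref{e:equiv3} are \emph{not} literally exact at the leftmost spot --- the restriction $C^k(M;R)\to\prod_\alpha C^k(U_\alpha;R)$ kills every cochain supported on simplices that are not $U$-small, so its kernel is nonzero for $k\geq 1$ --- and some form of the subdivision/small-cochain comparison you describe is unavoidable; it is what must underlie the row-exactness statement the paper imports from \cite{lerman2008differential}. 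In short, the paper's proof buys brevity by outsourcing the real content to the degree-$2$ case, while yours makes the argument independent of that reference and makes the singular-cochain subtlety explicit.
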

\begin{proof}
The proof is very similar to the proof of \cite[Prop.~3.4]{lerman2008differential}, which shows that the presheaf $\DC_s^2$ is a 1-stack.  We summarize the main points here.  Let $M$ be a manifold and $\pi\colon U\to M$ a covering.  We need to show that the restriction functor $\pi^*\colon \DC_s^3(M) \to \DC_s^3(U_\bullet)$ is an equivalence.  By Remark \ref{r:trunc} and Proposition \ref{p:equiv doublecpx}, it suffices to verify that the restriction $\calH^3(\tau_1\opDC_s^*(M)) \to \calH^3(\tau_1 \opDC_s^*(U_\bullet)_\mathrm{tot})$ is an equivalence.  Such an equivalence follows directly from the triviality of the cohomology of the double complex
\begin{equation}\labell{e:equiv3}
\xymatrix{
\vdots & \vdots & \vdots &  \\
\opDC_s^3(M) \ar[u]_d \ar[r]^{\pi^*} &\opDC^3_s(U_0) \ar[u]_{d} \ar[r]^{\partial} & \opDC^3_s(U_1) \ar[u]_{-d} \ar[r]^{\partial} &  \dots \\
\opDC_s^2(M) \ar[u]_d \ar[r]^{\pi^*} &\opDC^2_s(U_0) \ar[u]_{d} \ar[r]^{\partial} & \opDC^2_s(U_1) \ar[u]_{-d} \ar[r]^{\partial} &  \dots \\
\widetilde{\opDC}_s^1(M) \ar[u]_d \ar[r]^{\pi^*} &\widetilde{\opDC}^1_s(U_0) \ar[u]_{d} \ar[r]^{\partial} & \widetilde{\opDC}^1_s(U_1) \ar[u]_{-d} \ar[r]^{\partial} &   \dots \\
}
\end{equation}
where $\widetilde{\opDC}_s^1(-) = \opDC_s^1(-)/\im\, d$.
By the acyclic assembly lemma of homological algebra, it suffices to verify that the above rows are exact, which is shown directly in \cite{lerman2008differential}.
\end{proof}

We will be mainly interested in the case of differential 3-cocycles with heights $s=1,2,3$.  We record the following Proposition for later use, whose proof is completely analogous to the one appearing in \cite[Sections 4.2 and 4.3]{lerman2008differential} for differential cocycles of degree 2.

\begin{proposition} \labell{p:degree3coh} \mbox{}
\begin{enumerate}
\item Let $s\in \{1,2\}$. For any manifold $M$, the natural projection  $\pr\colon DC^3_s(M) \to C^3(M;\ZZ)$ induces an isomorphism on cohomology: $H^3(DC_s^*(M)) \cong H^3(M;\ZZ)$.
\item For any proper Lie groupoid $\Gamma_1 \toto \Gamma_0$, the natural projection
\[
 \pr\colon   \bigoplus_{p+q=3} DC_1^p(\Gamma_q) \to \bigoplus_{p+q=3} C^p(\Gamma_q;\ZZ)
 \]
  induces an isomorphism on cohomology: $H^3(DC_1^*(\Gamma_\bullet)_\mathrm{tot}) \cong H^3(\Gamma_\bullet;\ZZ)$.
\end{enumerate}
\end{proposition}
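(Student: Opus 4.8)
The plan is to treat both parts uniformly by regarding the projection $\pr\colon(c,h,\omega)\mapsto c$ as a surjective morphism of (total) cochain complexes and analyzing its kernel via the associated long exact sequence. In each bidegree the kernel of $\pr$ consists of the triples $(0,h,\omega)$, on which the differential of $\opDC_s^\ast$ restricts to $(h,\omega)\mapsto(\omega-dh,\,d\omega)$, where $\omega\in\Omega^\ast$ is viewed inside $C^\ast(-;\RR)$ via integration. Write $K^\ast(-)$ for this kernel complex; it is the height-$s$ truncation (setting $\omega=0$ in degrees $<s$) of the complex $L^\ast(-)$ with $L^k=C^{k-1}(-;\RR)\oplus\Omega^k(-)$ carrying the same differential. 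Once I show the relevant kernel cohomology vanishes in degrees $3$ and $4$, the long exact sequence forces $\pr_\ast$ to be an isomorphism on $H^3$; here I use that $\pr$ is surjective (lift $c$ to $(c,0,0)$) and that $H^3(C^\ast(M;\ZZ))=H^3(M;\ZZ)$, respectively $H^3(C^\ast(\Gamma_\bullet;\ZZ)_{\mathrm{tot}})=H^3(\Gamma_\bullet;\ZZ)$, by definition.

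The essential computational input is that $L^\ast(N)$ is acyclic for every manifold $N$. Indeed, up to a shift $L^\ast(N)$ is the algebraic mapping cone of the integration map $\iota\colon(\Omega^\ast(N),d)\to(C^\ast(N;\RR),d)$, which is a quasi-isomorphism by de~Rham's theorem; hence $L^\ast(N)$ has vanishing cohomology in every degree. For Part~(1) this is all that is needed: for $s\in\{1,2\}$ the truncation $K^\ast(M)$ agrees with $L^\ast(M)$ in all degrees $\geq s$, and since $3-1=2\geq s$ we get $H^p(K^\ast(M))=H^p(L^\ast(M))=0$ for $p\geq 3$; in particular $H^3(K^\ast(M))=H^4(K^\ast(M))=0$, and the long exact sequence gives $H^3(\opDC_s^\ast(M))\cong H^3(M;\ZZ)$.

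For Part~(2) (with $s=1$) the kernel is the total complex of the double complex $K^p(\Gamma_q)$, and I would compute its cohomology with the spectral sequence obtained by taking vertical ($\opDC$-degree) cohomology first. From the acyclicity of $L^\ast$ together with $K^0=0$ one gets, for each fixed $q$, that $H^p(K^\ast(\Gamma_q))=0$ for $p\neq 1$ while $H^1(K^\ast(\Gamma_q))\cong\CIN(\Gamma_q)$ (the unique surviving group, coming from $Z^1(L^\ast)=B^1(L^\ast)\cong L^0=\Omega^0$). Thus $E_1$ collapses onto the single row $p=1$, on which the horizontal differential is $\partial=\sum(-1)^i\partial_i^\ast$; the sequence therefore degenerates and yields $H^n(K_{\mathrm{tot}})\cong H^{n-1}(\CIN(\Gamma_\bullet),\partial)$, the differentiable groupoid cohomology of $\Gamma_\bullet$ with real (equivalently $\CIN$) coefficients, shifted by one. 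Properness now enters decisively: for a proper Lie groupoid this cohomology vanishes in all positive degrees (one averages with a cutoff function / normalized Haar system to build a contracting homotopy). Consequently $H^3(K_{\mathrm{tot}})\cong H^2(\CIN(\Gamma_\bullet))=0$ and $H^4(K_{\mathrm{tot}})\cong H^3(\CIN(\Gamma_\bullet))=0$, and the long exact sequence again produces the isomorphism $H^3(\opDC_1^\ast(\Gamma_\bullet)_{\mathrm{tot}})\cong H^3(\Gamma_\bullet;\ZZ)$.

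The main obstacle is Part~(2): one must correctly identify the kernel double complex's cohomology — pinning down that, after the de~Rham collapse, exactly one row $\CIN(\Gamma_\bullet)$ survives — and then invoke properness to annihilate the surviving real groupoid cohomology in the relevant degrees. The properness hypothesis is indispensable, since without it $H^2$ or $H^3$ of $(\CIN(\Gamma_\bullet),\partial)$ may be nonzero and the isomorphism fails. Minor care is also needed with the alternating signs $(-1)^q d$ in the double complex and with making the mapping-cone identification precise across the height-$s$ truncation, but these do not affect the cohomology in the degrees at issue.
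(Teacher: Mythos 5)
Your proposal is correct and follows essentially the same route as the paper, which does not spell out the argument but declares it ``completely analogous'' to the degree-2 proofs in Lerman--Malkin (Sections 4.2 and 4.3): there, too, one uses the short exact sequence induced by $\pr$, the acyclicity of the kernel (a mapping cone of the integration quasi-isomorphism $\Omega^*\to C^*(-;\RR)$), and, in the groupoid case, the collapse onto the row $\CIN(\Gamma_\bullet)$ followed by the vanishing of smooth groupoid cohomology of proper Lie groupoids in positive degrees via a cutoff/Haar-system averaging argument. Your degree-3 bookkeeping (needing only kernel vanishing in degrees $3$ and $4$, which holds since $3-1\geq s$ for $s\in\{1,2\}$) is exactly the intended adaptation.
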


\section{Dixmier-Douady Bundles}\labell{s:dd}

We begin by recalling some definitions surrounding Dixmier-Douady bundles.  For further background, we refer to  \cite{alekseev2012dirac} and \cite{raeburn1998morita}.

\begin{definition}[Dixmier-Douady Bundles]\labell{d:dd}
Fix a manifold $M$.  A \textbf{Dixmier-Douady bundle} (\textbf{DD-bundle}) $\calA\to M$ is a locally trivial bundle of $C^*$-algebras with typical fibre $\KK(\calH)$, the $C^*$-algebra of compact operators on a separable complex Hilbert space $\calH$, and with structure group $\operatorname{Aut}(\KK(\calH))=\PU(\calH)$.  Here, we use the strong operator topology.

A \textbf{Morita isomorphism} of DD-bundles $\calE\colon(\calA_1\to M)\dashto(\calA_2\to M)$ is a locally trivial Banach space bundle $\calE\to M$ with typical fibre  $\KK(\calH_1,\calH_2)$, the compact operators from $\calH_1$ to $\calH_2$ (where the typical fibre of $\calA_i$ is $\KK(\calH_i)$, $i=1,2$).  The bundle $\calE$ comes equipped with a natural fibrewise ($\calA_2,\calA_1$)-bimodule structure 
$$
\calA_2\circlearrowleft\calE\circlearrowright\calA_1,
$$
locally modelled on the natural $(\KK(\calH_2),\KK(\calH_1))$-bimodule structure on $\KK(\calH_1,\calH_2)$ given by post- and pre-composition of operators.  
The composition of two Morita isomorphisms $\calE_1\colon\calA_1\dashto\calA_2$ and $\calE_2\colon\calA_2\dashto\calA_3$ is given by $\calE_2\circ\calE_1=\calE_2\otimes_{\calA_2}\calE_1$,  the fibrewise completion of the (algebraic) tensor product over $\calA_2$.

Given two Morita isomorphisms $\calE_1,\calE_2\colon\calA_1\dashto\calA_2$, a \textbf{$2$-isomorphism} $\tau\colon\calE_1\Rightarrow\calE_2$ is a continuous bundle isomorphism $\tau\colon \calE_1\to\calE_2$ that intertwines the norms and the ($\calA_2,\calA_1$)-bimodule structures. 
\end{definition}

\begin{remark}
One can also define a \textbf{Morita morphism} $(\Phi,\calE)\colon(\calA_1\to M_1)\dashto(\calA_2\to M_2)$ of two DD-bundles $\calA_1\to M_1$ and $\calA_2\to M_2$ as a pair: a continuous map $\Phi\colon M_1\to M_2$, and a Morita isomorphism $\calE\colon\calA_1\dashto\Phi^*\calA_2$.  If one were to view a $2$-stack as a bicategory fibred in $2$-groupoids, then this definition would be required.  However, since we are taking the (equivalent) sheaf perspective of a $2$-stack, we will not need to work with these.  
\end{remark}

\begin{lemma}\labell{l:line bdle}
Let $M$ be a smooth manifold, and let $\calA_1,\calA_2 \to M$ be DD-bundles over $M$.  Suppose that $\calE, \calE'\colon  \calA_1 \dashto \calA_2$ are Morita isomorphisms, and set $L=\Hom_{\calA_2-\calA_1}(\calE,\calE')$ denote the bundle of bimodule homomorphisms $\calE\to \calE'$.  Then 
\begin{enumerate}
\item $L$ is a Hermitian line bundle over $M$; and
\item the fibrewise `evaluation map' $\calE\otimes L \to \calE'$ is an isomorphism of ($\calA_2,\calA_1$)-bimodule bundles.
\end{enumerate}
\end{lemma}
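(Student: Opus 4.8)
The plan is to reduce the statement to a fibrewise computation and then isolate the one genuinely non-formal input: that the only bimodule endomorphisms of the standard imprimitivity bimodule $\KK(\calH_1,\calH_2)$ are scalar multiples of the identity. First I would fix a point $x\in M$ and a small neighbourhood $V$ over which $\calA_1,\calA_2,\calE,\calE'$ are all simultaneously trivialized, so that $(\calA_i)|_V\cong V\times\KK(\calH_i)$ and $\calE|_V,\calE'|_V\cong V\times\KK(\calH_1,\calH_2)$ as bundles of $(\calA_2,\calA_1)$-bimodules (shrinking $V$ if necessary). Under these trivializations the fibre $L_x=\Hom_{\calA_2-\calA_1}(\calE_x,\calE'_x)$ is identified with the space of $(\KK(\calH_2),\KK(\calH_1))$-bimodule homomorphisms $\KK(\calH_1,\calH_2)\to\KK(\calH_1,\calH_2)$, independently of $x\in V$; once this space is shown to be one-dimensional, local triviality of $L$ and continuity of all the structure maps follow at once from the trivializations of $\calE$ and $\calE'$.

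The key step, and the main obstacle, is to show this endomorphism space is exactly $\CC$. I would argue with rank-one operators: for $\eta\in\calH_2$, $\zeta\in\calH_1$ write $\theta_{\eta,\zeta}=|\eta\rangle\langle\zeta|$, and recall that $b\,\theta_{\eta,\zeta}\,a=\theta_{b\eta,a^*\zeta}$ for $b\in\KK(\calH_2)$, $a\in\KK(\calH_1)$. Given a bimodule homomorphism $T$ and unit vectors $e\in\calH_2$, $f\in\calH_1$ with rank-one projections $p=\theta_{e,e}$, $q=\theta_{f,f}$, the relation $T(\theta_{e,f})=T(p\,\theta_{e,f}\,q)=p\,T(\theta_{e,f})\,q$ forces $T(\theta_{e,f})\in p\,\KK(\calH_1,\calH_2)\,q=\CC\,\theta_{e,f}$, so $T(\theta_{e,f})=\lambda(e,f)\,\theta_{e,f}$. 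Transporting $e$ and $f$ by partial isometries (using $u\,\theta_{e,f}=\theta_{ue,f}$ and $\theta_{e,f}\,v=\theta_{e,v^*f}$) shows $\lambda(e,f)$ is a single constant $\lambda$; continuity of $T$ together with density of finite-rank operators then gives $T=\lambda\,\id$. This establishes $L_x\cong\CC$, hence statement (1) that $L$ is a line bundle.

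For the Hermitian structure I would use the C*-module inner products carried by $\calE_x$ and $\calE'_x$, namely the $\KK(\calH_1)$-valued pairing $\langle\xi,\eta\rangle=\xi^*\eta$. A bimodule homomorphism $T\colon\calE_x\to\calE'_x$ is adjointable (in the trivialization it is a scalar multiple of the inner-product-preserving standard isomorphism), and a short check shows its adjoint $T^*\colon\calE'_x\to\calE_x$ is again a bimodule homomorphism; then $T^*T\in\End_{\calA_2-\calA_1}(\calE_x)=\CC\,\id$ defines $h(T,T)$, and polarization defines the full pairing $h$. Positivity follows from $\langle T\xi,T\xi\rangle=h(T,T)\,\langle\xi,\xi\rangle\ge 0$ with equality only when $T=0$, and in a local trivialization $h$ is the standard metric, so it varies continuously. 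Finally, for statement (2), the fibrewise evaluation $\ev\colon\calE_x\otimes L_x\to\calE'_x$, $\xi\otimes T\mapsto T(\xi)$, is a bimodule map since $\ev(b\xi a\otimes T)=T(b\xi a)=b\,T(\xi)\,a$; because $L_x$ is spanned by a single isomorphism $\iota_0\colon\calE_x\to\calE'_x$, the assignment $\xi\otimes\iota_0\mapsto\iota_0(\xi)$ is a bijection, so $\ev$ is a fibrewise isomorphism, and local triviality upgrades this to an isomorphism of $(\calA_2,\calA_1)$-bimodule bundles.
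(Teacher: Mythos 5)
Your proposal is correct, and its core reduction is the same as the paper's: everything hinges on the single non-formal fact that every $(\KK(\calH_2),\KK(\calH_1))$-bimodule endomorphism of $\KK(\calH_1,\calH_2)$ is a scalar, proved by playing with rank-one operators. The execution of that step differs in flavor: the paper fixes orthonormal bases, expands $\phi(f_k\otimes\overline{e_l})$ in matrix units, isolates coefficients by left/right multiplication to conclude $\phi$ is ``diagonal,'' and then asserts that ``a similar argument shows $\phi$ is a scalar''; your compression argument $T(\theta_{e,f})=p\,T(\theta_{e,f})\,q\in p\,\KK(\calH_1,\calH_2)\,q=\CC\,\theta_{e,f}$ followed by transport of $e$ and $f$ under partial isometries is the basis-free version of the same computation, and it actually supplies the constancy-of-the-scalar step that the paper leaves to the reader. (A small remark: you can avoid invoking continuity and density of finite-rank operators entirely, since for an arbitrary compact $a$ the identity $p\,T(a)\,q=T(paq)=\lambda\,paq$ for all rank-one projections $p,q$ already forces $T(a)=\lambda a$.) Beyond that, your write-up covers material the paper's proof omits altogether: the simultaneous local trivialization giving local triviality of $L$, the Hermitian metric --- which you could state even more intrinsically as $S^{*}T=h(S,T)\,\id$ in $\End_{\calA_2\text{-}\calA_1}(\calE_x)=\CC\,\id$ --- and the verification that the evaluation map $\calE\otimes L\to\calE'$ is a fibrewise bimodule isomorphism, i.e.\ part (2) of the statement. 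So your proof is a strict superset of the paper's in completeness, with the same underlying idea.
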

\begin{proof}
That $L$ is a 1-dimensional complex line bundle follows from the fact that any bimodule homomorphism $\KK(\calH_1,\calH_2) \to \KK(\calH_1,\calH_2)$ is a scalar.  
To see this, choose (possibly finite) bases $\{e_1, e_2, \ldots \}$ for $\calH_1$ and $\{f_1, f_2, \ldots \}$ for $\calH_2$.
 Let $\phi\colon \KK(\calH_1,\calH_2)\to \KK(\calH_1,\calH_2)$ be a bimodule homomorphism.  Then for any $a \in \KK(\calH_1,\calH_2)$, $x \in \KK(\calH_2)$, and $y\in \KK(\calH_1)$, we have $\phi(xay)=x\phi(a)y$.  
 Recall that compact operators are the norm closure of the subspace generated by rank one operators. For $u\in \calH_2$ and $v\in \calH_1$, let $u\otimes \overline{v}$ be the rank-one operator $(u\otimes \overline{v})(w) = (w,v)u$, where $(-,-)$ is the inner product on $\calH_1$.
  For $k,l=1,2,\ldots$, write $\phi(f_k\otimes \overline{e_l}) = \sum \varphi^{(k,l)}_{ij} f_i\otimes \overline{e_j}$.  Recall that $(f_i\otimes \overline{f_j})(f_k\otimes \overline{e_l})=f_i\otimes \overline{e_l}$ if $j=k$ and $0$ otherwise, and similarly $(f_k\otimes \overline{e_l})(e_m\otimes \overline{e_n})=f_k\otimes \overline{e_n}$ if $m=n$ and $0$ otherwise.  Then multiplying on the left and right by appropriate elements to isolate coefficients, it is easy to see that the coefficients $\varphi^{(k,l)}_{ij}$ vanish unless $k=i$ and $l=j$.  That is $\phi$ is a diagonal operator.
  A similar argument shows that $\phi$ is a scalar.  
\end{proof}

Any $*$-bundle isomorphism $\phi\colon \calA_1 \to \calA_2$ gives rise to a Morita isomorphism: namely, $\calA_2\colon \calA_1 \dashto \calA_2$ with the natural left $\calA_2$-module structure and right $\calA_1$-module structure induced by $\phi$.  

Recall that given a Morita isomorphism $\calE\colon \calA_1 \dashto \calA_2$, the \textbf{opposite} Morita isomorphism $\calE^*\colon \calA_2 \dashto \calA_1$ is given by $\calE^*=\calE$ as real vector bundles, with opposite (conjugate) scalar multiplication.  There are natural 2-isomorphisms $\calE^* \otimes_{\calA_2} \calE \cong \calA_1$ and $\calE \otimes_{\calA_1} \calE^* \cong \calA_2$.  

It is straightforward to verify that for any manifold $M$, the collection of DD-bundles over $M$ form the objects of a bigroupoid (a weak 2-category in which 1-arrows are coherently invertible and 2-arrows are invertible), with Morita isomorphisms as 1-arrows and 2-isomorphisms as 2-arrows.  Denote this bigroupoid by $\calBBS(M)$.

Given a map $f\colon M_1 \to M_2$ of manifolds, pullbacks of DD-bundles, as well as 1- and 2-arrows are defined in the usual way, resulting in a pseudofunctor  $f^*\colon \calBBS(M_2) \to \calBBS(M_1)$.   In fact, we obtain a (lax) functor $\calBBS$ from $\Mfld^{op}$ to $\mathbf{BiCat}$. 

\begin{proposition} \labell{p:preshfbicat}
The assignment $M\mapsto \calBBS(M)$ defines a presheaf of bicategories.
\end{proposition}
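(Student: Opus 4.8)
The plan is to supply the remaining data of a lax functor $\Mfld^{op} \to \BiCat$ required by Definition \ref{d:preshbicat} and to verify its coherence condition, exploiting throughout that every $1$- and $2$-isomorphism in sight is the canonical comparison between iterated pullbacks. The preceding discussion already provides items (1) and (2): each $\calBBS(M)$ is a bigroupoid, and each smooth map $f\colon S \to T$ induces a pullback pseudofunctor $f^*\colon \calBBS(T) \to \calBBS(S)$ sending a DD-bundle $\calA$ to $f^*\calA$, a Morita isomorphism to its pullback, and a $2$-isomorphism to its pullback. So the work is to construct the natural isomorphisms $\phi_{f,g}$ of \eqref{item:nat}, the modifications $\theta_{f,g,h}$ of \eqref{item:mod}, and to check the quadruple coherence condition.

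First I would define $\phi_{f,g}$. Given $R \xrightarrow{f} S \xrightarrow{g} T$ and a DD-bundle $\calA \to T$, the universal property of pullback yields a canonical comparison $\phi^\calA\colon f^*g^*\calA \to (gf)^*\calA$ covering the identity on $R$; since this map is fibrewise the identity on $\KK(\calH)$, it is a $*$-isomorphism of $C^*$-algebra bundles and hence a Morita isomorphism. For a Morita isomorphism $F\colon \calA \dashto \calA'$, the analogous canonical identification of the pulled-back Banach space bundles supplies the $2$-isomorphism $\sigma_F\colon \phi^{\calA'} \circ f^*g^*F \Rightarrow (gf)^*F \circ \phi^\calA$, and these assemble into a pseudo-natural isomorphism $\phi_{f,g}$.

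Next I would define $\theta_{f,g,h}$ and verify its property. For $Q \xrightarrow{f} R \xrightarrow{g} S \xrightarrow{h} T$, both composite $1$-isomorphisms $\alpha^\calA, \beta^\calA\colon f^*g^*h^*\calA \to (hgf)^*\calA$ are built by composing canonical pullback comparisons. Since the universal property makes the canonical comparison $f^*g^*h^*\calA \to (hgf)^*\calA$ unique, $\alpha^\calA$ and $\beta^\calA$ coincide on total spaces, so I may take $\theta(\calA)$ to be the identity $2$-isomorphism. The required equation \eqref{eq:mod} then reduces to the naturality of the canonical comparisons with respect to a $2$-arrow $\rho\colon E \Rightarrow E'$, which holds because those comparisons are fibrewise identities. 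Likewise, for a quadruple $P \xrightarrow{f} Q \xrightarrow{g} R \xrightarrow{h} S \xrightarrow{k} T$, both pasted $2$-cells from \eqref{eq:fghk-L} to \eqref{eq:fghk-R} realize the unique canonical comparison $f^*g^*h^*k^*\calA \to (khgf)^*\calA$ and hence agree, establishing coherence.

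The one genuinely non-formal point, and the step I expect to require the most care, is checking that these canonical total-space comparison isomorphisms are compatible with the extra structure carried by DD-bundles and Morita isomorphisms --- the $C^*$-algebra multiplication, the $(\calA_2,\calA_1)$-bimodule actions, and the fibrewise norms --- so that they genuinely live in $\calBBS$ rather than merely in the category of fibre bundles. This is routine, since pullback acts as the identity on typical fibres and all structure maps pull back compatibly, but it is where the verification has content; once it is in place, every coherence constraint follows formally from the uniqueness of canonical pullbacks.
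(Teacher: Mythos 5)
Your overall strategy matches the paper's: turn the canonical pullback comparison $*$-isomorphisms into Morita isomorphisms, produce the $2$-isomorphisms $\sigma_F$ from the bimodule actions, and verify the modification and coherence conditions. However, there is a genuine gap at the key step: your claim that $\alpha^{\calA}$ and $\beta^{\calA}$ ``coincide on total spaces,'' so that $\theta(\calA)$ may be taken to be the identity $2$-isomorphism. This is false, because composition of $1$-arrows in $\calBBS(Q)$ is not composition of bundle maps; it is the fibrewise (completed) tensor product of bimodule bundles over the middle algebra. Concretely, writing $\calE^{\calA}_{f,g}$ for the Morita isomorphism associated to the comparison $f^*g^*\calA \to (gf)^*\calA$, the two composites are
$$
\alpha^{\calA} = \calE^{\calA}_{gf,h} \otimes_{(gf)^*h^*\calA} \calE^{h^*\calA}_{f,g},
\qquad
\beta^{\calA} = \calE^{\calA}_{f,hg} \otimes_{f^*(hg)^*\calA} f^*\calE^{\calA}_{g,h},
$$
which are tensor products over \emph{different} algebras; neither equals $(hgf)^*\calA$ on the nose, and they are not equal to each other as Banach bundles. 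The uniqueness of the pullback comparison at the level of underlying bundle maps does not transport to the Morita level, precisely because the bicategorical composition is a tensor product rather than a strict composite.

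The paper instead defines $\theta(\calA)$ as the composite of the two canonical right-action $2$-isomorphisms identifying $\alpha^{\calA}$ and $\beta^{\calA}$ each with $(hgf)^*\calA$, and then the verification of \eqref{eq:mod} is genuine work: it is the commutativity of a diagram of tensor-product bundles, which holds because a $2$-arrow $\rho\colon \calF \Rightarrow \calG$ intertwines the bimodule actions on (pullbacks of) $\calF$ and $\calG$; the quadruple coherence for \eqref{eq:fghk-L} and \eqref{eq:fghk-R} is checked the same way. So the step you dismiss as following ``formally from the uniqueness of canonical pullbacks'' is exactly where the content of the proof lies, and your closing paragraph — which frames the remaining work as merely checking that the comparisons respect norms and bimodule structures — misidentifies it. Your argument could be repaired by replacing the identity $2$-cells with these action-map $2$-isomorphisms and then carrying out the bimodule-naturality check, which is what the paper does.
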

\begin{proof}
Given a pair of composable maps 
$$
R\stackrel{f}{\longrightarrow} S \stackrel{g}{\longrightarrow} T
$$
and a DD-bundle $\calA\to T$, there is a canonical bundle $*$-isomorphism 
$$
\phi_{f,g}\colon f^*(g^*\calA))\to (g f)^*\calA
$$
 and hence a corresponding canonical Morita isomorphism, the  ($(gf)^*\calA,f^*(g^*\calA)$)-bimodule $\calE_{f,g}^\calA=\calE^\calA=(gf)^*\calA$.
Moreover, if $\calF\colon \calA \dashto \calA'$ is a Morita isomorphism, there is a canonical 2-isomorphism
\[
\sigma_\calF\colon  \calE^{\calA'} \otimes_{f^*g^*\calA'} f^*g^*\calF \to (gf)^*\calF \otimes_{(gf)^*\calA} \calE^\calA
\]
induced from the ($\calA',\calA$)-bimodule action on $\calF$.
In other words, we have a natural isomorphism of functors $f^*\circ g^* \cong (g f)^*$.  

Given composable maps 
$$
Q \stackrel{f}{\longrightarrow} R \stackrel{g}{\longrightarrow} S \stackrel{h}{\longrightarrow} T
$$
we show next that there exists a modification between the composite natural isomorphism
\begin{equation}\labell{eq:compleft}
f^*\circ g^*\circ h^* \cong (g f)^* \circ h^* \cong (h g f)^*
\end{equation}
to the composite natural isomorphism
\begin{equation}\labell{eq:compright}
f^* \circ  g^* \circ h^* \cong f^* \circ (h g)^* \cong (h g f)^*.
\end{equation}
Indeed, given a DD-bundle $\calA\to T$, the first composition is given by the Morita isomosphism
$$
\calE_{gf,h}^\calA \otimes_{(gf)^*h^*\calA} \calE_{f,g}^{h^*\calA} = (hgf)^*\calA \otimes_{(gf)^*h^*\calA} (gf)^*h^*\calA 
$$
while the second is given by 
$$
\calE_{f,hg}^\calA \otimes_{f^*(hg)^*\calA} f^*\calE_{g,h}^\calA = (hgf)^*\calA  \otimes_{f^*(hg)^*\calA} f^*(hg)^*\calA.
$$
Each of these is 2-isomorphic (via the respective right-action maps)  to $(hgf)^*\calA$. 
To verify that this results in a family $\theta(\calA)$ of modifications from the
composition \eqref{eq:compleft} to \eqref{eq:compright} (as in Definition \ref{d:preshbicat} \eqref{item:mod}), let $\rho\colon \calF\to \calG$ be a 2-isomorphism between Morita isomorphisms $\calF,\calG\colon \calA \dashto \calA'$.  The required equality \eqref{eq:mod} follows from the commutativity of the diagram below (where we have omitted the subscripts under the $\otimes$ symbols, for simplicity).
\[
\xymatrix@C=5em{
 (hgf)^*\calA' \otimes  (gf)^*h^*\calA' \otimes f^*g^*h^*\calF \ar[d]_{\id \otimes \sigma_{h^*\calF}} \ar[r]^{\theta(\calA')\otimes f^*g^*h^*\rho} & 
 (hgf)^*\calA' \otimes f^*(hg)^*\calA' \otimes f^*g^*h^*\calG \ar[d]^{\id \otimes f^*\sigma_\calG}  \\
 (hgf)^*\calA' \otimes (gf)^*h^*\calF \otimes (gf)^*h^*\calA \ar[d]_{ \sigma_\calF \otimes \id}
  & (hgf)^*\calA' \otimes f^*(hg)^*\calG \otimes f^*(hg)^*\calA \ar[d]^{\sigma_\calG \otimes \id} \\
 (hgf)^*\calF \otimes (hgf)^*\calA \otimes (gf)^*h^*\calA \ar[r]_{(hgf)^*\rho \otimes \theta(\calA)} 
 &
 (hgf)^*\calG \otimes (hgf)^*\calA \otimes f^*(hg)^*\calA 
}
\]

The commutativity follows from the fact that $\rho$ respects the bimodule actions on (pullbacks of) $\calF$ and $\calG$.  The coherence condition is similarly verified; it follows from the axioms of a bimodule action.
\end{proof}

Applying Definition \ref{d:eqobj} to the presheaf $\calBBS$ of DD-bundles over manifolds, we make the following definition.

\begin{definition}[Equivariant DD-bundles] \labell{d:eqDD}Let $\Gamma_1 \rightrightarrows \Gamma_0$ be a Lie groupoid.  
\begin{enumerate}
\item A \textbf{$\Gamma_\bullet$-equivariant DD-bundle} is a triple $(\calA, \calE, \tau)$ consisting of a DD-bundle $\calA\to \Gamma_0$, a Morita isomorphism $\calE\colon \partial_0^*\calA \dashto \partial_1^*\calA$ and a 2-isomorphism $\tau\colon  \partial_2^*\calE \otimes_{\partial_0^*\partial_1^*\calA} \partial_0^*\calE \Rightarrow \partial_1^* \calE$ satisfying the coherence condition $\partial_2^*\tau \circ (\id\star \partial_0^*\tau) =\partial_1^*\tau \circ (\partial_3^*\tau \star \id)$. 

\item A \textbf{$\Gamma_\bullet$-equivariant Morita isomorphism} $(\calE,\calA,\tau) \dashto (\calE',\calA',\tau')$ is a pair $(\calG,\phi)$ consisting of a Morita isomorphism $\calG\colon \calA \dashto \calA'$ and a 2-isomorphism $\phi\colon  \calE' \otimes \partial_0^*\calG \Rightarrow \partial_1^*\calG \otimes \calE$ satisfying a coherence condition $(\id\star\tau) \circ (\partial_2^* \phi \star \id) \circ (\id \star \partial_0^*\phi) = \partial_1^*\phi \circ (\tau' \star \id)$. 
\item A \textbf{$\Gamma_\bullet$-equivariant 2-isomorphism} $(\calG,\phi)\Rightarrow(\calG',\phi')$ is a 2-isomorphism $\beta\colon\calG\Rightarrow\calG'$ satisfying $\phi'\circ(\id\star\del_0^*\beta)=(\del_1^*\beta\star\id)\circ\phi$.

\end{enumerate}
\end{definition}

\begin{remark} Along the lines of Remark \ref{r:simpids}, Definition \ref{d:eqDD} freely uses simplicial identities---for example, by viewing $\partial_1^*\calE$ as a ($\partial_1^*\partial_1^*\calA,\partial_0^*\partial_0^*\calA$)-bimodule and $\partial_2^*\calE$ as a ($\partial_1^*\partial_1^*\calA,\partial_0^*\partial_1^*\calA$)-bimodule.
\end{remark}

\begin{remark} \labell{r:equivariance}
The usual notion of a $G$-equivariant DD-bundle over $M$ is more restrictive than that of a $(G\times M \toto M)$-equivariant DD-bundle.  A DD-bundle $\calA \to M$ equipped with a $G$-action that lifts the $G$-action on $M$ gives an equivariant DD-bundle in the sense of Definition \ref{d:eqDD}, with $\calE=\partial_1^*\calA$ coming from the $*$-isomorphisms given by the $G$-action on $\calA$, and with trivial 2-isomorphism component.
However, as noted in Remark \ref{r:equivariance2}, for compact Lie groups $G$, every $(G\times M \toto M)$-equivariant DD-bundle is Morita isomorphic to a genuine $G$-equivariant DD-bundle.
\end{remark}

We will need the following Lemma for Theorem \ref{t:dd} below.  

\begin{lemma} \labell{l:coaction}
Suppose $g\in \U(\calH)$ implements an automorphism $Ad_g\colon \KK(\calH) \to \KK(\calH)$.   View $\KK(\calH)$ as a $\KK(\calH)$-bimodule, with right action via $Ad_g$. Let $e_1\in \calH$ be a unit vector. The map $\tilde{g}\colon\calH^{op} \to \calH^{op} \otimes_{\KK(\calH)}\KK(\calH)$ given by $\tilde{g}(v) = e_1\otimes (e_1\otimes \bar{v})g^*$ is an isomorphism of $(\CC,\KK(\calH))$-bimodules.  In other words, $\tilde{g}$ fills in the 2-cell:

\mute{muted:

\[
\begin{tikzpicture}[line join = round, line cap = round, > = latex]

\coordinate [label=above:$\CC$] (0) at (1,1.7);
\coordinate [label=left:$\KK(\calH)$] (1) at (0,0);
\coordinate [label=right:$\KK(\calH)$] (2) at (2,0);

\coordinate [label=right:$\calH^{op}$] (t1) at (1.5,1);
\coordinate [label=left:$\calH^{op}$] (t2) at (0.6,1);
\coordinate[label=above:$\tilde{g}$] (tau) at (1,0.4);
\coordinate[label=below:$\KK(\calH)$] (Adg) at (1,0);

\fill (0) circle (0.04cm);
\fill (1) circle (0.04cm);
\fill (2) circle (0.04cm);

\draw[<-] (0)--(1);
\draw[<-] (1)--(2);
\draw[<-] (0)--(2);

\end{tikzpicture}
\]

}

$$\xymatrix@C=1em{
& \CC \dtwocell<\omit>{<0>{\,\,\tilde{g}}} &  \\
\KK(\calH) \ar[ur]^{\calH^{op}} & & \KK(\calH) \ar[ll]^{\KK(\calH)} \ar[ul]_{\calH^{op}}  \\
}$$
(Here, $u\otimes \bar{v}$ denotes the rank 1 operator $w\mapsto (w,v)u$, where $(-,-)$ is the inner product on  $\calH$.) 
\end{lemma}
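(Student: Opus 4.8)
The plan is to fix all bimodule conventions up front and then reduce every verification to two elementary identities for rank-one operators. Recall that $\calH^{op}$ is the $(\CC,\KK(\calH))$-bimodule underlying $\calH$ with conjugated scalars, whose right $\KK(\calH)$-action is $v\cdot a = a^* v$, while the horizontal arrow is $\KK(\calH)$ viewed as a $(\KK(\calH),\KK(\calH))$-bimodule with \emph{standard} left multiplication but right action twisted through $\Ad_g$, so that $b\cdot c = b\,\Ad_g(c) = b\,gcg^*$. The composite $\calH^{op}\otimes_{\KK(\calH)}\KK(\calH)$ is then the $(\CC,\KK(\calH))$-bimodule balanced over the relation $(a^*v)\otimes b = v\otimes ab$, with right $\KK(\calH)$-action $(v\otimes b)\cdot c = v\otimes b\,gcg^*$; the direct arrow $\calH^{op}$ carries the untwisted right action $v\cdot c = c^* v$. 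The whole proof rests on the single computation $(u\otimes\overline{v})\,a = u\otimes\overline{a^*v}$ and its adjoint $(u\otimes\overline{v})^* = v\otimes\overline{u}$, both immediate from the definition of the rank-one operators; I would record these first.

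First I would check that $\tilde g(v)=e_1\otimes (e_1\otimes\overline{v})g^*$ is a well-defined $(\CC,\KK(\calH))$-bimodule homomorphism. Left $\CC$-linearity (with respect to the conjugated scalar actions) is a routine consequence of the rank-one identity together with the fact that central scalars may be slid across the balanced tensor product. The substantive point is compatibility with the right $\KK(\calH)$-action: using $(e_1\otimes\overline{v})c = e_1\otimes\overline{c^*v}$ I would compute $\tilde g(v\cdot c)=\tilde g(c^*v)=e_1\otimes(e_1\otimes\overline{v})c\,g^*$, and compare it with $\tilde g(v)\cdot c = e_1\otimes (e_1\otimes\overline{v})g^*\,\Ad_g(c)=e_1\otimes(e_1\otimes\overline{v})g^*gcg^*$. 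These agree exactly because $g^*g=1$, so the factor $g^*$ in the definition is precisely what cancels the $\Ad_g$-twist on the target; this is the step that explains the shape of the formula.

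To prove $\tilde g$ is an isomorphism I would exhibit an explicit two-sided inverse $\psi(v\otimes b)=(bg)^*v = g^*b^*v$. First I would check that $\psi$ descends to the balanced tensor product, since $\psi((a^*v)\otimes b)=g^*b^*a^*v=g^*(ab)^*v=\psi(v\otimes ab)$. Then $\psi\circ\tilde g=\id$ follows, after the $g^*g=1$ cancellation, from $(e_1\otimes\overline{v})^* e_1 = (v\otimes\overline{e_1})e_1 = (e_1,e_1)\,v = v$, using that $e_1$ is a unit vector. For $\tilde g\circ\psi=\id$ the key is the balanced-tensor identity $u\otimes b = e_1\otimes(e_1\otimes\overline{u})\,b$, obtained by applying the balancing relation $(a^*e_1)\otimes b = e_1\otimes ab$ to $a=e_1\otimes\overline{u}$ (so that $a^*e_1=u$); combining this with the rank-one identity gives $\tilde g(\psi(u\otimes b))=u\otimes b$.

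I expect the main obstacle to be purely organizational: pinning down the correct right-module conventions on $\calH^{op}$ and the location of the $\Ad_g$-twist so that the single cancellation $g^*\Ad_g(c)=cg^*$ does all the work, and verifying that $\psi$ respects the balancing relation. Once the conventions are fixed, each verification is a one-line manipulation of rank-one operators. As an alternative to constructing $\psi$, one could invoke the rigidity underlying Lemma~\ref{l:line bdle}: both $\calH^{op}$ and $\calH^{op}\otimes_{\KK(\calH)}\KK(\calH)$ are invertible $1$-morphisms $\KK(\calH)\dashto\CC$, so any nonzero bimodule homomorphism between them is automatically a $2$-isomorphism; exhibiting $\psi$, however, keeps the argument self-contained.
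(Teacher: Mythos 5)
Your proof is correct, and its first half coincides with the paper's argument: the paper's entire proof consists of noting $\CC$-linearity, performing exactly your computation $\tilde g(v\cdot x)=\tilde g(x^*v)=e_1\otimes(e_1\otimes\overline{x^*v})g^*=e_1\otimes(e_1\otimes\bar v)xg^*=\tilde g(v)\cdot x$ (the same cancellation $g^*\Ad_g(x)=xg^*$ doing all the work), followed by the remark that $\tilde g$ is independent of the choice of unit vector. Where you genuinely go further is on the isomorphism property: the paper never verifies bijectivity explicitly, leaving it to the rigidity behind Lemma~\ref{l:line bdle} (any nonzero bimodule homomorphism between these Morita bimodules is invertible, being a scalar under the natural identification) --- which is precisely the alternative route you mention at the end. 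Your explicit inverse $\psi(v\otimes b)=g^*b^*v$, with the check that it descends through the balancing relation and the identity $u\otimes b=e_1\otimes(e_1\otimes\bar u)b$, makes the lemma self-contained and supplies the step the paper leaves tacit; note also that once $\psi$ is a set-theoretic two-sided inverse, you need not check separately that it is a module map. The only point glossed over (in the paper as well) is analytic: the tensor product in Definition~\ref{d:dd} is a fibrewise completion, so $\psi$, defined on simple tensors, should be observed to be bounded (clear from $\|g^*b^*v\|\le\|b\|\,\|v\|$) so that it extends to the completion.
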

\begin{proof}
Let $e_1$ be a unit vector in $\calH^{op}$, and let $\tilde{g}$ be as in the statement of the lemma.  Since $\tilde{g}$ is $\CC$-linear, it remains to check that $\tilde{g}$ is a  map of right $\KK(\calH)$-modules. Let $x\in \KK(\calH)$, and observe that for $v\in \calH^{op}$
\begin{align*}
\tilde{g}(v\cdot x) &= \tilde{g}(x^*v) \\
			&= e_1\otimes (e_1\otimes \overline{x^*v})g^* \\
			&= e_1 \otimes (e_1\otimes \overline{v})xg^* \\
			&= \tilde{g}(v) \cdot x 
\end{align*}
A direct calculation shows that $\tilde{g}$ is independent of the choice of unit vector. 
\end{proof}

\begin{theorem}\labell{t:dd}
The presheaf of DD-bundles $\calBBS$ is a 2-stack over $\Mfld$.
\end{theorem}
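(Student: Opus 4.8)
The plan is to verify the stack condition of Definition \ref{d:desc categ} directly: for every manifold $M$ and every open cover $\{U_\alpha\}$, with $\pi\colon U=\coprod_\alpha U_\alpha \to M$ the associated map and $U_\bullet$ the \v{C}ech groupoid, I would show that the restriction functor $\pi^*\colon \calBBS(M) \to \calBBS(U_\bullet)$ is an equivalence of bicategories. I would separate this into the prestack condition (that $\pi^*$ is fully faithful, i.e.\ induces an equivalence on each Hom-category) and the descent condition (that $\pi^*$ is essentially surjective on objects). Throughout, the guiding principle is that DD-bundles, Morita isomorphisms, and their $2$-isomorphisms are all locally trivial fibre bundles carrying extra structure, so the relevant gluing reduces to gluing of Banach or Hilbert fibre bundles together with a bookkeeping of the bimodule and coherence data recorded in Definition \ref{d:eqobj}.

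For fully faithfulness I would argue first at the level of $2$-arrows and then at the level of $1$-arrows. Given DD-bundles $\calA_1,\calA_2 \to M$ and Morita isomorphisms $\calE,\calE'\colon \calA_1 \dashto \calA_2$, Lemma \ref{l:line bdle} identifies the $2$-isomorphisms $\calE \Rightarrow \calE'$ with the unit-length sections of the Hermitian line bundle $L=\Hom_{\calA_2-\calA_1}(\calE,\calE')$; since such sections form a sheaf on $M$, the descent $2$-arrows glue uniquely, giving a bijection on $2$-arrows. For the $1$-arrows, a descent datum over $U_\bullet$ is a family of local Morita isomorphisms together with gluing $2$-isomorphisms obeying the cocycle condition of Definition \ref{d:eqobj}; since a Morita isomorphism is a Banach space bundle and the gluing $2$-isomorphisms are bundle isomorphisms intertwining the (pulled-back) bimodule structures, the standard clutching construction produces a global Banach space bundle over $M$ whose induced fibrewise $(\calA_2,\calA_1)$-bimodule structure is exactly the glued one. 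Hence $\pi^*$ is an equivalence on each Hom-category.

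For essential surjectivity -- the main content -- I would construct a global DD-bundle from a descent object $(\calA_\alpha,\calE_{\alpha\beta},\tau_{\alpha\beta\gamma})$ as in Definition \ref{d:eqDD}. After passing to a refinement I would trivialize each $\calA_\alpha \cong U_\alpha \times \KK(\calH_\alpha)$ and, replacing the descent datum by an isomorphic one in $\calBBS(U_\bullet)$ via stabilization, arrange a common typical fibre $\KK(\calH)$. On overlaps the Morita isomorphisms $\calE_{\alpha\beta}$ then take the form of $\KK(\calH)$ with right action twisted by a ($\PU(\calH)$-valued) automorphism; lifting locally to unitaries $g_{\alpha\beta}\in \U(\calH)$ and invoking Lemma \ref{l:coaction} to rigidify the bimodule structure, the coherence $2$-isomorphisms $\tau_{\alpha\beta\gamma}$ translate into the statement that the induced $\PU(\calH)$-valued functions form a genuine cocycle. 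Clutching the trivial bundles $U_\alpha \times \KK(\calH)$ along this cocycle yields a DD-bundle $\calA \to M$, and the tautological Morita isomorphisms $\calA|_{U_\alpha} \dashto \calA_\alpha$ together with the $2$-isomorphisms supplied by Lemma \ref{l:coaction} assemble, after checking the coherence equation of Definition \ref{d:eqobj}, into an isomorphism $\pi^*\calA \cong (\calA_\alpha,\calE_{\alpha\beta},\tau_{\alpha\beta\gamma})$ in $\calBBS(U_\bullet)$.

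The hard part will be precisely this last construction: the $\calE_{\alpha\beta}$ are bimodule bundles rather than algebra isomorphisms and their typical fibres may a priori differ, so one cannot simply glue the algebra bundles $\calA_\alpha$ directly. The role of Lemma \ref{l:coaction} is to convert the bimodule-and-coherence descent data into honest $\PU(\calH)$-valued transition functions satisfying a cocycle identity, while Lemma \ref{l:line bdle} governs the residual $\U(1)$-phase ambiguity (the choice of unitary lift $g_{\alpha\beta}$ over $\PU(\calH)$) through the Hermitian line bundle of intertwiners. Verifying that the resulting transition functions satisfy the cocycle condition -- equivalently, that the constructed tautological Morita isomorphisms and the $\tau_{\alpha\beta\gamma}$ fit together into a $1$-isomorphism in the descent bicategory -- is the step demanding the most care, and is where the coherence condition on $\tau$ from Definition \ref{d:eqobj} is consumed.
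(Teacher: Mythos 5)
Your overall skeleton matches the paper's proof: fully faithfulness on 2-arrows (gluing of bundle maps), essential surjectivity on Hom-categories (clutching of Banach bimodule bundles), reduction to a good cover, and then the real work of essential surjectivity on objects. The first two parts of your argument are correct. But the core of the object-level descent contains a genuine gap, and it sits exactly at the sentence ``the coherence $2$-isomorphisms $\tau_{\alpha\beta\gamma}$ translate into the statement that the induced $\PU(\calH)$-valued functions form a genuine cocycle.''

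This implication is false. Once the $\calA_\alpha$ are trivialized, writing $\calE_{\alpha\beta}$ as $\KK(\calH)$ with right action twisted by $g_{\alpha\beta}$ is a pure gauge choice: the Picard group of $\KK(\calH)$ is trivial, so for any inner automorphisms $u=\Ad_{\hat u}$, $v=\Ad_{\hat v}$ the map $a\mapsto a\hat u\hat v^*$ is a bimodule isomorphism between the corresponding twisted bimodules, and over contractible overlaps the needed unitary lifts exist, so any two twistings give isomorphic bimodule bundles. In particular you could take every $g_{\alpha\beta}=1$; the existence of the $\tau_{\alpha\beta\gamma}$ therefore imposes \emph{no} relation among the $g_{\alpha\beta}$ in $\PU(\calH)$, and your clutching step would then produce the trivial bundle $M\times\KK(\calH)$ even when the descent datum has nontrivial DD-class. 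What the coherence of $\tau$ actually pins down, relative to chosen bimodule trivializations (the paper uses sections $\sigma_{\alpha\beta}$ of the line bundles $L_{\alpha\beta}$ of Lemma \ref{l:line bdle}, after Morita-trivializing each $\calA_\alpha\dashto\CC$), is an $\SS^1$-valued \v{C}ech $2$-cocycle $s_{\alpha\beta\gamma}$ measuring the discrepancy between $\tau_{\alpha\beta\gamma}$ and the composite of the $\sigma$'s. The missing --- and essential --- ingredient is the classical realization theorem (Dixmier--Douady; see \cite[Proposition 4.83]{raeburn1998morita}): for $\dim\calH=\infty$ one can find transition functions $g_{\alpha\beta}\colon U_{\alpha\beta}\to\PU(\calH)$ that \emph{do} satisfy the cocycle identity, together with unitary lifts $\hat g_{\alpha\beta}$ whose coboundary is exactly $s_{\alpha\beta\gamma}$ (equation \eqref{eq:lift} in the paper). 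Only with this input can one build $\calB$ and then use Lemma \ref{l:coaction} and the $\sigma_{\alpha\beta}$ to assemble the comparison isomorphism $\pi^*\calB\cong(\calA_\alpha,\calE_{\alpha\beta},\tau_{\alpha\beta\gamma})$, whose coherence is checked against \eqref{eq:lift}. A sanity check that something must be missing in your version: nothing in your argument uses $\dim\calH=\infty$, yet the realization of an arbitrary $2$-cocycle by an honest $\PU(\calH)$-cocycle with compatible lifts genuinely requires it (for finite-dimensional fibres only torsion classes arise).
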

\begin{proof}
To show that $\calBBS$ is a 2-stack, we verify that for any $M$ and any cover $\pi\colon U\to M$ by open sets $\{U_\alpha\}$ in $M$, the restriction $\pi^*\colon\calBBS(M) \to\calBBS(U_\bullet)$ induces an equivalence of bicategories.

We begin by verifying that $\pi^*$ is fully faithful on $\Hom$ categories (i.e.~bijections on the corresponding 2-morphisms $\hhom$).  
Let $\mathcal{A}$ and $\mathcal{B}$ be DD-bundles over $M$.  Denote the restriction by $(-)\big|_U$.  Let $\mathcal{E},\mathcal{F} \in \Hom(\mathcal{A},\mathcal{B})$, and consider the restriction $\hhom(\mathcal{E},\mathcal{F}) \to \hhom(\mathcal{E}\big|_U,\mathcal{F}\big|_U)$.  This is a bijection because a continuous bundle map is uniquely determined by its restrictions to open sets in a cover that agree on overlaps.  
%
%

Next, we show that the restriction functor is essentially surjective on $\Hom$ categories, which shows that the restriction functor (on bicategories) is fully faithful (and hence $\calBBS$ is a 2-prestack).  Recall a 1-morphism in $\mathcal{A}\big|_U \to \mathcal{B}\big|_U$ in $\calBBS(U_\bullet)$ consists of a collection $(\mathcal{E}_\alpha,\phi_{\alpha\beta})$ of bundles $\mathcal{E}_\alpha \to U_\alpha$ of bimodules together with 2-morphisms $\phi_{\alpha\beta} \colon  \mathcal{E}_\alpha \big|_{U_{\alpha\beta}} \to \mathcal{E}_\beta \big|_{U_{\alpha\beta}}$
satisfying 
\[
\phi_{\beta\gamma}\big|_{U_{\alpha\beta\gamma}} \circ \phi_{\alpha\beta}\big|_{U_{\alpha\beta\gamma}} = \phi_{\alpha\gamma}\big|_{U_{\alpha\beta\gamma}}.
\]
Hence the bundles $\mathcal{E}_\alpha$ glue together to give a 1-morphism $\mathcal{E} \in \Hom(\mathcal{A},\mathcal{B})$; therefore, restriction is essentially surjective on $\Hom$ categories, as desired.

Finally, we  show that $\pi^*$ is an equivalence of bicategories, by showing it is essentially surjective on objects.
Let $(\mathcal{A}_\alpha, \mathcal{E}_{\alpha\beta}, \tau_{\alpha\beta\gamma})$ be an object in $\calBBS(U_\bullet)$, where $\mathcal{A}_\alpha \to U_\alpha$ are DD-bundles, equipped with $\left(\mathcal{A}_\alpha \big|_{U_{\alpha\beta}},\mathcal{A}_\beta \big|_{U_{\alpha\beta}}\right)$-bimodules $\mathcal{E_{\alpha\beta}} \to U_{\alpha\beta}$ and 2-isomorphisms 
\[
\tau_{\alpha\beta\gamma}\colon  \mathcal{E}_{\alpha\beta}\big|_{U_{\alpha\beta\gamma}} \otimes \mathcal{E}_{\beta\gamma}\big|_{U_{\alpha\beta\gamma}} \to \mathcal{E}_{\alpha\gamma}\big|_{U_{\alpha\beta\gamma}}
\]
satisfying the coherence condition ``$\partial \tau =0$''---i.e., such  that the following diagram commutes (over $U_{\alpha\beta\gamma\delta}$):
\begin{equation}\labell{eq:cohtau}
\begin{gathered}
\xymatrix@C=5em{
\calE_{\alpha\beta} \otimes \calE_{\beta\gamma}\otimes \calE_{\gamma\delta} \ar[d]_{\tau_{\alpha\beta\gamma}\otimes \id} \ar[r]^{\id \otimes \tau_{\beta\gamma\delta}} & \calE_{\alpha\beta}\otimes \calE_{\beta\delta} \ar[d]^{\tau_{\alpha\beta\delta}} \\
\calE_{\alpha\gamma}\otimes \calE_{\gamma\delta} \ar[r]^{\tau_{\alpha\gamma\delta}} & \calE_{\alpha\delta}
}
\end{gathered}
\end{equation}
We wish to find a DD-bundle $\mathcal{B}\to M$ and a 1-morphism  $(\mathcal{G}_\alpha,\phi_{\alpha\beta})\colon \mathcal{B}\big|_U \to (\mathcal{A}_\alpha, \mathcal{E}_{\alpha\beta}, \tau_{\alpha\beta\gamma})$.  

Note that it suffices to assume that $\{ U_\alpha \}$ is a good cover.  Indeed, suppose $V$ is a refinement of $U$.  Then by \cite[Lemma 4.3]{nikolaus2011equivariance}, $V_\bullet \to U_\bullet$ is a 
weak equivalence of groupoids,
and hence, by \cite[Theorem 2.1.6]{nikolaus2011equivariance}, the restriction functor $r\colon \calBBS(U_\bullet) \to \calBBS({V_\bullet})$ is fully faithful.  Suppose we have  $\mathcal{B}$ and a morphism $\calB\big|_V\cong (\calB\big|_U)\big|_V \to (\mathcal{A}_\alpha, \mathcal{E}_{\alpha\beta}, \tau_{\alpha\beta\gamma})\big|_V$.  Since $r$ is fully faithful, we get the desired morphism $\calB\big|_U \to (\mathcal{A}_\alpha, \mathcal{E}_{\alpha\beta}, \tau_{\alpha\beta\gamma})$.

The DD-bundle $\mathcal{B}$ will result from an $\SS^1$-valued 2-cocycle, defined by gluing trivial $\KK(\calH)$-bundles (with $\dim\calH = \infty$) over $U_\alpha$ with transition maps $g_{\alpha\beta}\colon U_{\alpha\beta} \to \PU(\calH)$.
In this case, the restriction $\calB\big|_U$ in $\calBBS(U_\bullet)$ may be described as follows. Let $\calB_\alpha = \calB\big|_{U_\alpha} = U_\alpha\times \mathbb{K}(\calH)$.  The transition maps $g_{\alpha\beta}$ give bundle isomorphisms $\calB_\beta \to \calB_\alpha$, and with  corresponding Morita isomorphism $\calB_{\alpha\beta} = \calB_\alpha$ with the right $\calB_\beta$-action obtained via $g_{\alpha\beta}$.  The cocycle condition $g_{\alpha\beta}g_{\beta\gamma}=g_{\alpha\gamma}$ guarantees that the action map 
\[
\lambda_{\alpha\beta\gamma}\colon \calB_{\alpha\beta} \otimes_{\calB_\beta} \calB_{\beta\gamma} \to \calB_{\alpha\gamma}
\]
is a map of ($\calB_\alpha,\calB_\gamma$)-bimodules.  Hence,
$\calB\big|_U =(\calB_\alpha, \calB_{\alpha\beta},\lambda_{\alpha\beta\gamma})$.

%
%

To get the 2-cocycle defining $\mathcal{B}$, begin by  choosing Morita trivializations $\calF_\alpha\colon \calA_\alpha \dashrightarrow \CC$ (using the contractibility of the open sets $U_\alpha$).  Then over each $U_{\alpha\beta}$, we get the pair of Morita isomorphisms,
\[
\calE_{\alpha\beta},\, \calF_\alpha^*\otimes \calF_\beta \colon  \calA_\beta \dashrightarrow \calA_\alpha.
\]
Since we are assuming a good cover, the line bundles $L_{\alpha\beta}=\Hom(\calE_{\alpha\beta}, \calF_\alpha^*\otimes \calF_\beta )$ are trivializable.  Therefore, we may choose 2-isomorphisms (i.e. sections of $L_{\alpha\beta}$) $\sigma_{\alpha\beta}\colon \calE_{\alpha\beta}\to \calF_\alpha^*\otimes \calF_\beta $.  That is, we have the following 2-cell:

\mute{muted:
\[
\begin{tikzpicture}[line join = round, line cap = round, > = latex]

\coordinate [label=above:$\CC$] (0) at (1,1.7);
\coordinate [label=left:$\calA_\alpha$] (1) at (0,0);
\coordinate [label=right:$\calA_\beta$] (2) at (2,0);

\coordinate[label=below:$\sigma_{\alpha\beta}$] (tau) at (1,0.8);

\fill (0) circle (0.04cm);
\fill (1) circle (0.04cm);
\fill (2) circle (0.04cm);

\draw[<-] (0)--(1);
\draw[<-] (1)--(2);
\draw[<-] (0)--(2);

\end{tikzpicture}
\]
}

$$\xymatrix@C=2em{
 & \CC \ar[dl]_{\calF_{\alpha}^{*}}  & \\
\calA_{\alpha}  & & \calA_{\beta} \ar[ll]^{\calE_{\alpha\beta}} \ar[ul]_{\calF_{\beta}} \lltwocell<\omit>{<3>\sigma_{\alpha\beta}\;\;\;\;\;} \\
}$$

Over triple intersections, we get the pair of Morita isomophisms,
\[
\xymatrix{
\calE_{\alpha\beta}\otimes \calE_{\beta\gamma} \ar[r]^-{\tau_{\alpha\beta\gamma}} & \calE_{\alpha\gamma}
},\text{ and}
\]
\[
\xymatrix{
{\calE_{\alpha\beta}\otimes \calE_{\beta\gamma} }\ar[r]^-{\sigma_{\alpha\beta}\otimes \sigma_{\beta\gamma}}
& {\calF_\alpha^*\otimes \calF_\beta \otimes \calF_\beta^*\otimes \calF_\gamma} \ar[r] & {\calF_\alpha^*\otimes \calF_\gamma} \ar[r]^-{\sigma_{\alpha\gamma}^{-1}} & \calE_{\alpha\gamma} 
},
\]
which correspond to the two ways of filling in the 2-cell shown below.

\mute{muted:
\[
\begin{tikzpicture}[line join = round, line cap = round, scale=1.7, > = latex]

\coordinate [label=above:$\calA_\beta$] (0) at (1,1.7);
\coordinate [label=left:$\calA_\gamma$] (1) at (0,0);
\coordinate [label=right:$\calA_\alpha$] (2) at (2,0);

\coordinate [label=above:$\sigma_{\beta\gamma}$] (bg) at (0.7,0.5);
\coordinate [label=above:$\sigma_{\alpha\beta}$] (ab) at (1.3,0.5);
\coordinate [label=above:$\sigma_{\alpha\gamma}$] (ga) at (1,0.1);

\coordinate (c) at (1,0.577);
\fill(c) circle (0.02cm);
\draw [->] (0)--(c);
\draw [->] (1)--(c);
\draw [->] (2)--(c);
\fill (0) circle (0.02cm);
\fill (1) circle (0.02cm);
\fill (2) circle (0.02cm);

\draw[<-] (0)--(1);
\draw[->] (1)--(2);
\draw[->] (0)--(2);

\begin{scope}[xshift=1.5in]

\coordinate [label=above:$\calA_\beta$] (0) at (1,1.7);
\coordinate [label=left:$\calA_\gamma$] (1) at (0,0);
\coordinate [label=right:$\calA_\alpha$] (2) at (2,0);

\coordinate[label=below:$\tau_{\alpha\beta\gamma}$] (tau) at (1,0.8);

\fill (0) circle (0.02cm);
\fill (1) circle (0.02cm);
\fill (2) circle (0.02cm);

\draw[<-] (0)--(1);
\draw[->] (1)--(2);
\draw[->] (0)--(2);

\end{scope}

\end{tikzpicture}
\]

}

$$\xymatrix@C=4em@R=4em{
 & \calA_{\beta} \ar[ddr] \ddrtwocell<\omit>{<3>\;\;\;\;\;\sigma_{\alpha\beta}} \ar[d] & & & \calA_{\beta} \ar[ddr] & \\
 & \CC & & & & \\
\calA_{\gamma} \ar[uur] \ar[ur] \ar[rr] \uurtwocell<\omit>{<2>\;\;\;\;\sigma_{\beta\gamma}} & & \calA_{\alpha} \ar[ul] \lltwocell<\omit>{<5>\sigma_{\alpha\gamma}\;\;\;\;\;\;} & \calA_{\gamma} \ar[uur] \ar[rr] \rrtwocell<\omit>{<-10>\;\;\;\;\;\;\;\tau_{\alpha\beta\gamma}} & & \calA_{\alpha} 
}
$$

That is, we have two sections of the line bundle $L'_{\alpha\beta\gamma}=\Hom(\calE_{\alpha\beta}\otimes \calE_{\beta\gamma}, \calE_{\alpha\gamma})$, which must therefore differ by an $\SS^1$-valued function $s_{\alpha\beta\gamma}\colon U_{\alpha\beta\gamma} \to \SS^1$ defined by:
\[
\tau_{\alpha\beta\gamma} = s_{\alpha\beta\gamma} \sigma_{\alpha\gamma}^{-1} \circ (\sigma_{\alpha\beta}\otimes \sigma_{\beta\gamma}).
\]
We claim that $s_{\alpha\beta\gamma}$ defines a 2-cocycle. To see this, consider $\rho_{\alpha\beta\gamma}=\sigma_{\alpha\gamma}^{-1} \circ (\sigma_{\alpha\beta}\otimes \sigma_{\beta\gamma})$.  A direct calculation shows that $\rho$ also satisfies the coherence condition ``$\partial \rho=0$'' (similar to \eqref{eq:cohtau}).  Hence taking $\partial$ of both sides of the equation above gives the desired cocycle condition
\[
s_{\alpha\beta\gamma}s_{\alpha\gamma\delta} = s_{\alpha\beta\delta}s_{\beta\gamma\delta}.
\]
Let $\calB\to M$ be a DD-bundle defined by this 2-cocycle.  

For later use, we note that since $\{ U_\alpha \}$ is a good cover, we can find lifts $\hat{g}_{\alpha\beta}\colon U_{\alpha\beta} \to \U(\calH)$ (with $Ad_{\hat{g}_{\alpha\beta}}=g_{\alpha\beta}$) so that
\begin{align} \labell{eq:lift}
(\hat{g}_{\alpha\beta}\hat{g}_{\beta\gamma})^* = s_{\alpha\beta\gamma}\hat{g}_{\alpha\gamma}^*.
\end{align}
(A priori, \eqref{eq:lift} may only be true up to a coboundary; however, one can choose lifts that give equality on the nose.  See the proof of \cite[Proposition 4.83]{raeburn1998morita}.)


Next we construct an isomorphism $(\calG,\phi)\colon (\calB_\alpha, \calB_{\alpha\beta},\lambda_{\alpha\beta\gamma})\to  (\mathcal{A}_\alpha, \mathcal{E}_{\alpha\beta}, \tau_{\alpha\beta\gamma}).$  Since each $\calB_\alpha$ is a trivial $\mathbb{K}(\calH)$-bundle, we have canonical Morita isomorphisms $\calH^{op}\colon \calB_\alpha \dashrightarrow \CC$ (recall $\calH^{op}$ is the trivial $\calH$-bundle over $U_\alpha$ with  conjugate scalar multiplication, viewed as a ($\mathbb{C},\mathbb{K}(\calH)$)-bimodule.) Let 
\[
\calG_\alpha = \calF_\alpha^* \otimes \calH^{op}\colon  \calB_\alpha \dashrightarrow \calA_\alpha.
\]

To construct $\phi$, we use  Lemma \ref{l:coaction}, to get the  $(\CC,\calB_\beta)$-bimodule map
\[
\tilde{g}_{\alpha\beta}\colon \calH^{op} \to \calH^{op} \otimes_{\calB_\alpha} \calB_{\alpha\beta}.
\]
 Let $\phi_{\alpha\beta}\colon \calE_{\alpha\beta} \otimes \calG_\beta \to \calG_\alpha\otimes \calB_{\alpha\beta}$ denote the 2-morphism given by  the interior of the 2-cell below. 

\mute{muted:
\[
\begin{tikzpicture}[line join = round, line cap = round, scale=3, > = latex]
\coordinate[label=left:$\calB_\beta$] (0) at (0,0);
\coordinate[label=above:$\calA_\beta$] (0p) at (1,1);
\coordinate[label=right:$\calA_\alpha$] (1p) at (2,0);
\coordinate[label=below:$\calB_\alpha$] (1) at (1,-1);
\coordinate (c) at (1,0);

\coordinate[label=above:$\CC$] (lc) at (0.9,0);
\coordinate[label=above:$\calG_{\beta}$] (gb) at (0.45,0.55);
\coordinate[label=above:$\calE_{\alpha\beta}$] (ab) at (1.55,0.55);
\coordinate[label=above:$\calH^{o\!p}$] (h) at (0.5,0);
\coordinate[label=below:$\calF_{\alpha}$] (fa) at (1.5,0);
\coordinate[label=left:$\calF_{\beta}$] (fb) at (1,0.45);
\coordinate[label=right:$\calH^{o\!p}$] (hh) at (1,-0.45);
\coordinate[label=below:$\calG_{\alpha}$] (ga) at (1.55,-0.55);
\coordinate[label=below:$\calB_{\alpha\beta}$] (ba) at (0.45,-0.55);
\coordinate[label=above:$\tilde{g}_{\alpha\beta}$] (gab) at (0.65,-0.45);
\coordinate[label=below:$\sigma_{\alpha\beta}$] (gab) at (1.35,0.45);

\fill (0) circle (0.02cm);
\fill (0p) circle (0.02cm);
\fill (1) circle (0.02cm);
\fill (1p) circle (0.02cm);
\fill (c) circle (0.02cm);

\draw[->] (0)--(1);
\draw[->] (1)--(1p);
\draw[->] (0)--(0p);
\draw[->] (0p)--(1p);

\draw[->] (0)--(c);
\draw[->] (1)--(c);
\draw[->] (0p)--(c);
\draw[->] (1p)--(c);

\end{tikzpicture}
\]
}

$$\xymatrix@C=4em@R=4em{
 & \calA_\beta \ar[d]_{\calF_{\beta}} \ar[dr]^{\calE_{\alpha\beta}} \drtwocell<\omit>{<4>\;\;\;\;\;\sigma_{\alpha\beta}} & \\
\calB_\beta \ar[ur]^{\calG_{\beta}} \ar[r]^{\calH^{op}} \rtwocell<\omit>{<4>\;\;\;\;\;\tilde{g}_{\alpha\beta}} \ar[dr]_{\calB_{\alpha\beta}} & \CC & \calA_\alpha \ar[l]^{\calF_{\alpha}} \\
 & \calB_\alpha \ar[u]_{\calH^{op}} \ar[ur]_{\calG_{\alpha}} & 
}$$

We claim that the coherence condition on $\phi$ (see Definition \ref{d:eqobj} \eqref{ditem:2arrow})  is satisfied, and hence $(\calG,\phi)$ defines an isomorphism. Indeed, the coherence condition amounts to the commutativity of the following diagram.
\begin{equation}
\begin{gathered}
\xymatrix{
\calE_{\alpha\beta} \otimes \calE_{\beta\gamma} \otimes \calF_\gamma^* \otimes \calH^{op} \ar[r]^{\id \otimes \phi_{\beta\gamma}} \ar[d]_{\tau_{\alpha\beta\gamma}\otimes \id \otimes \id}
&
\calE_{\alpha\beta} \otimes \calF^*_\beta \otimes \calH^{op} \otimes \calB_{\beta\gamma} \ar[r]^{\phi_{\alpha\beta} \otimes \id}
&
\calF_\alpha^*\otimes \calH^{op} \otimes \calB_{\alpha\beta} \otimes \calB_{\beta\gamma} \ar[d]^{\lambda_{\alpha\beta\gamma}}
\\
\calE_{\alpha\gamma} \otimes \calF_\gamma^*\otimes \calH^{op} \ar[rr]^{\phi_{\alpha\gamma}}
&
&
\calF_\alpha^* \otimes \calH^{op} \otimes \calB_{\alpha\gamma} 
}
\end{gathered}
\end{equation}
To see that the diagram commutes, observe that the composition along the top can be written as
\[
\xymatrix{
\calE_{\alpha\beta} \otimes \calE_{\beta\gamma} \otimes \calF_\gamma^* \otimes \calH^{op} \ar[r]^-{\sigma_{\alpha\beta}\otimes \sigma_{\beta\gamma} \otimes \id \otimes \tilde{g}_{\beta\gamma}} 
& 
*[r]{\calF_\alpha^*\otimes  \calF_\beta \otimes \calF_\beta^* \otimes \calF_\gamma \otimes \calF_\gamma^* \otimes \calH^{op} \otimes \calB_{\beta\gamma}} \ar@(d,u)[]!<10ex,-2ex>;[dl]!<1ex,2ex>_{\mathsf{pair}} \\
\calF_\alpha^*\otimes \calH^{op} \otimes \calB_{\beta\gamma} \ar[r]^-{\id \otimes \tilde{g}_{\alpha\beta} \otimes \id}
& \calF_\alpha^*\otimes \calH^{op} \otimes \calB_{\alpha\beta} \otimes \calB_{\beta\gamma}
\ar[r]^-{\id \otimes \id \otimes \lambda_{\alpha\beta\gamma}} & 
\calF_\alpha^*\otimes \calH^{op} \otimes \calB_{\alpha\gamma}
}
\]
(where the curved arrow $\mathsf{pair}$ uses the natural pairing given by the fibrewise inner product), while the composition along the bottom can be written as
\[
\xymatrix@C=4em{
\calE_{\alpha\beta} \otimes \calE_{\beta\gamma} \otimes \calF_\gamma^* \otimes \calH^{op} \ar[r]^{\tau_{\alpha\beta\gamma}\otimes \id \otimes \tilde{g}_{\alpha\gamma}}
& \calE_{\alpha\gamma} \otimes \calF_\gamma^*\otimes \calH^{op} \otimes \calB_{\alpha\gamma} \ar@(d,u)[dl]_{\sigma_{\alpha\gamma \otimes \id \otimes \id \otimes \id}} \\
\calF_\alpha^* \otimes \calF_\gamma \otimes \calF_\gamma^* \otimes \calH^{op} \otimes \calB_{\alpha\gamma} \ar[r]^-{\mathsf{pair}}
& \calF_\alpha^*\otimes \calH^{op} \otimes \calB_{\alpha\gamma}
}
\]
Comparing the above compositions, we see that they agree because equation \eqref{eq:lift} holds. 

It follows that the presheaf $\calBBS$ of DD-bundles is a 2-stack.
\end{proof}

\section{The Dixmier-Douady $2$-Functor}\labell{s:dd2fun}

In this section we prove the main result of this paper, Theorem \ref{t:dd-dc}, which states that the 2-stack of Dixmier-Douady bundles is equivalent to the 2-stack of differential 3-cocycles.  We also relate differential 3-cocycles with $\SS^1$-bundle gerbes, and establish  analogous results for $\SS^1$-bundle gerbes with connection and curving.

\begin{definition}
\labell{trivial BS}
Let $\calBBS_\triv$ be the presheaf of bicategories with a  single object in each $\calBBS_\triv(M)$, the trivial DD-bundle $\calA_0:=M\times\KK(\calH)$ (with $\dim \calH=\infty)$, with $1$-arrows all Morita isomorphisms from $\calA_0$ to itself, along with all associated $2$-arrows. That is, the morphism category over $M$ is $\Hom_{\calBBS(M)}(\calA_0,\calA_0)$.
\end{definition}

\begin{lemma}\labell{l:prestack}
$\calBBS_\triv$ is a prestack with stackification $\calBBS$.
\end{lemma}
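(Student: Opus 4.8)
The plan is to exhibit the inclusion $F \colon \calBBS_\triv \to \calBBS$ and verify the two defining properties (i) and (ii) of a stackification recalled in Section \ref{ss:stacks}, drawing the stack structure on $\calBBS$ from Theorem \ref{t:dd}. First I would note that, by construction, $\calBBS_\triv$ is the full sub-presheaf of $\calBBS$ determined by the single object $\calA_0 = M \times \KK(\calH)$ with $\dim\calH = \infty$. Since the pullback of $\calA_0$ along any smooth map is again the trivial bundle, the inclusion $F$ is a genuine morphism (pseudo-natural transformation) of presheaves of bicategories, and each $F_M$ is the inclusion of a full sub-bicategory. In particular $F_M$ is fully faithful, because $\Hom_{\calBBS_\triv(M)}(\calA_0,\calA_0) = \Hom_{\calBBS(M)}(\calA_0,\calA_0)$ on the nose; this is property (i).

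For property (ii) I must show that every object of $\calBBS(M)$ is locally isomorphic to one in the image of $\calBBS_\triv$. Let $\calA \to M$ be a DD-bundle with typical fibre $\KK(\calH_\calA)$, and choose a cover $\{U_\alpha\}$ fine enough that $\calA|_{U_\alpha} \cong U_\alpha \times \KK(\calH_\calA)$; write $U = \coprod_\alpha U_\alpha$ and $\pi \colon U \to M$. Over $U$ the trivial Banach-space bundle $U \times \KK(\calH,\calH_\calA)$, carrying its fibrewise $(\KK(\calH_\calA),\KK(\calH))$-bimodule structure, is a Morita isomorphism $\calA_0|_U \dashto \calA|_U = \pi^*\calA$. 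This is simply the bundle-theoretic incarnation of the standard imprimitivity bimodule witnessing the Morita equivalence $\KK(\calH) \sim \KK(\calH_\calA)$, and it furnishes the required local isomorphism $F_U(\calA_0) \to \pi^*\calA$.

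It remains to verify that $\calBBS_\triv$ is itself a prestack. Here I would observe that, by Definition \ref{d:eqobj}, every object of $\calBBS_\triv(U_\bullet)$ has underlying object $\calA_0$ over $U_0$, and its $1$- and $2$-arrows are exactly those of $\calBBS(U_\bullet)$ between equivariant objects with underlying object $\calA_0$; thus $\calBBS_\triv(U_\bullet)$ is the full sub-bicategory of $\calBBS(U_\bullet)$ on such objects, and $\pi^*\calA_0$ lies in it. Consequently the restriction functor on $\Hom$-categories for $\calBBS_\triv$ coincides with that for $\calBBS$ at the object $\calA_0$, namely
\[
\Hom_{\calBBS(M)}(\calA_0,\calA_0) \longrightarrow \Hom_{\calBBS(U_\bullet)}(\pi^*\calA_0,\pi^*\calA_0),
\]
which is an equivalence because $\calBBS$ is a stack, hence a prestack, by Theorem \ref{t:dd}. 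Therefore $\pi^* \colon \calBBS_\triv(M) \to \calBBS_\triv(U_\bullet)$ is fully faithful, establishing that $\calBBS_\triv$ is a prestack.

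The main obstacle is property (ii): one must guarantee that a DD-bundle with possibly \emph{finite}-dimensional fibres $\KK(\calH_\calA)$ is nevertheless locally Morita isomorphic to the fixed infinite-dimensional model $\calA_0$. This is exactly the phenomenon emphasized in the introduction --- that relaxing $*$-isomorphisms to Morita isomorphisms dissolves the distinction between different fibre dimensions --- and it is resolved cleanly by the imprimitivity bimodule $\KK(\calH,\calH_\calA)$. Everything else is bookkeeping, since $\calBBS_\triv$ is a full sub-presheaf of $\calBBS$ and the substantive descent content already lives in Theorem \ref{t:dd}.
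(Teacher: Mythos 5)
Your proof is correct and takes essentially the same route as the paper: the paper's entire proof is the one-line assertion that the lemma ``follows directly from Theorem~\ref{t:dd}'', and your argument simply makes explicit what that deferral hides --- that $\calBBS_\triv$ is a full sub-presheaf of $\calBBS$ (giving fibrewise fully-faithfulness of the inclusion and the prestack property, since a stack is in particular a prestack), and that local triviality of DD-bundles together with the imprimitivity bimodule $\KK(\calH,\calH_{\calA})$ gives local essential surjectivity. Nothing in your write-up deviates from the intended argument; it is a correct, fleshed-out version of it.
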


\begin{proof}
This follows directly from Theorem~\ref{t:dd}.
\end{proof}

\begin{theorem}
\labell{t:dd-dc}
The 2-stack  $\calBBS$ of DD-bundles   is equivalent to the 2-stack $\DC^3_1$ of differential 3-cocycles  of height 1.
\end{theorem}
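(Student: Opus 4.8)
The plan is to exploit Lemma~\ref{l:prestack}, which identifies $\calBBS$ with the stackification of the one-object prestack $\calBBS_\triv$, together with Theorem~\ref{t:dc}, which makes $\DC_1^3$ a $2$-stack. Since a stackification is unique up to equivalence (cf.\ \cite{breen1994classification,nikolaus2011equivariance}), it suffices to produce a morphism of presheaves of bicategories $F\colon \calBBS_\triv \to \DC_1^3$ that is fully faithful on $\Hom$-categories and locally essentially surjective; by the characterization of stackification recalled after Definition~\ref{d:desc categ}, any such $F$ exhibits the $2$-stack $\DC_1^3$ as a stackification of $\calBBS_\triv$, whence $\calBBS \cong \DC_1^3$ by Lemma~\ref{l:prestack}. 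This reduces the degree-$3$ statement to a comparison of single $\Hom$-categories, which is essentially the degree-$2$ (line bundle) correspondence of \cite{lerman2008differential}.

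First I would define $F$ on objects by sending the unique object $\calA_0$ of $\calBBS_\triv(M)$ to the zero cocycle $0\in\DC_1^3(M)$, so that $F$ becomes a functor $\Hom_{\calBBS(M)}(\calA_0,\calA_0) \to \Hom_{\DC_1^3(M)}(0,0) = \calH^2(\opDC_1^*(M))$. The construction on morphisms is the Dixmier--Douady dictionary already visible in the proof of Theorem~\ref{t:dd}: a Morita self-isomorphism $\calE\colon\calA_0\dashto\calA_0$ is classified by the Hermitian line bundle of Lemma~\ref{l:line bdle}; choosing local trivializations over a good cover produces the $\SS^1$-valued \v{C}ech cocycle (the $s_{\alpha\beta\gamma}$ of Theorem~\ref{t:dd}), and a choice of connection supplies a curvature $\omega\in\Omega^2(M)$ together with a primitive $h\in C^1(M;\RR)$, assembling into a differential $2$-cocycle $(c,h,\omega)$. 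A $2$-isomorphism $\tau\colon\calE_1\Rightarrow\calE_2$ maps to the class of the differential $1$-cochain relating the two cocycles. I would then verify that these assignments respect composition and carry the modification data of Definition~\ref{d:preshbicat}, so that $F$ is a genuine morphism of presheaves.

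Next I would check the two required properties. Full faithfulness is the statement that the induced functor on the $\Hom$-groupoids is an equivalence: on isomorphism classes both sides compute $H^2(M;\ZZ)$ (Hermitian line bundles, resp.\ $H^2(\opDC_1^*(M))\cong H^2(M;\ZZ)$ by the degree-$2$ result of \cite{lerman2008differential} and Remark~\ref{r:basicprops}), and the automorphism groups match as well; this is precisely the degree-$2$ content of \cite{lerman2008differential}. For local essential surjectivity I would show that every object $(c,h,\omega)$ of $\DC_1^3(M)$ is isomorphic to $0$ over each element $U$ of a good cover: since $H^3(U;\ZZ)=0$ and $H^2_{\mathrm{dR}}(U)=0$, one solves $(c,h,\omega)=d(b,k,\xi)$ locally, using the Poincar\'e lemma for $\omega=d\xi$ and the vanishing of the relevant \v{C}ech cohomology, thereby exhibiting a $1$-arrow $0\to(c,h,\omega)$. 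Hence every object of $\DC_1^3$ lies locally in the image of $F$.

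Granting these, $F$ realizes $\DC_1^3$ as a stackification of $\calBBS_\triv$, and Lemma~\ref{l:prestack} delivers the desired equivalence $\calBBS\cong\DC_1^3$. I expect the main obstacle to be the coherent construction of $F$ itself: the passage from a Morita self-isomorphism to a differential $2$-cocycle depends a priori on the auxiliary choices of good cover, local trivializations, and connection, so the real work is to organize these choices into a well-defined functor of prestacks that is compatible with the monoidal (composition) structure and with the modification $\theta$ of Definition~\ref{d:preshbicat}, and to confirm the equivalence at the level of $2$-arrows. Once the $\Hom$-level equivalence is in hand --- where the degree-$2$ result of \cite{lerman2008differential} does the heavy lifting --- the remaining verifications are bookkeeping with the coherence conditions.
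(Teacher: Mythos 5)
Your proposal is correct and follows essentially the same route as the paper: identify $\Hom_{\calBBS(M)}(\calA_0,\calA_0)\cong\mathcal{BS}^1(M)$ via Lemma~\ref{l:line bdle}, define a morphism of prestacks $\calBBS_\triv\to\DC_1^3$ sending the unique object to $\underline{0}$ and acting on $\Hom$-categories by the degree-$2$ equivalence of \cite{lerman2008differential}, check fibrewise full faithfulness and local essential surjectivity (via Proposition~\ref{p:degree3coh} and contractible covers), and conclude by uniqueness of stackification using Lemma~\ref{l:prestack}. The only difference is cosmetic: the paper invokes the Lerman--Malkin stack equivalence $\mathrm{Ch}\colon\mathcal{BS}^1\to\DC_1^2$ as a black box on $\Hom$-categories, so the coherence bookkeeping you flag as the ``main obstacle'' is already absorbed into that cited result rather than redone by hand via \v{C}ech data.
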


\begin{proof}
Fix an infinite dimensional separable Hilbert space $\calH$. Let $M$ be a manifold, and let $\calA_0$ be the trivial DD-bundle as in Definition \ref{trivial BS}.
By Lemma \ref{l:line bdle}, we have an equivalence $\Hom_{\calBBS(M)}(\calA_0,\calA_0)\cong \mathcal{BS}^1(M)$, where $\mathcal{BS}^1(M)$ denotes the category of principal $\SS^1$-bundles over $M$.  Define a morphism of 2-prestacks $\DiDo_{\triv}\colon  \calBBS_\triv \to \DC_1^3$ by setting $\DiDo_\triv(M)(\calA_0) = \underline{0}=(0,0,0)$, while on morphisms, $\DiDo_\triv(M)=\mathrm{Ch}(M)\colon \mathcal{BS}^1(M) \to \Hom_{\DC_1^3(M)}(\underline{0},\underline{0})= \DC_1^2(M)$ (see Remark \ref{r:basicprops}), where $\mathrm{Ch}$ is the equivalence of stacks in \cite{lerman2008differential}. Since $\DiDo_\triv$ is fibrewise fully faithful, and every object in $\DC_1^3$ is locally isomorphic to one in the image of $\calBBS_\triv$ (since isomorphism classes of objects in $\DC_1^3(M)$ are classified by $H^3(\opDC_1^3(M))\cong H^3(M;\ZZ)$, by Proposition \ref{p:degree3coh}, and $M$ can be covered by contractible open sets), then $\DC_1^3$ is a 2-stackification of $\calBBS_\triv$ (see \cite[Section 1.10]{breen1994classification}).  Since $\calBBS$ is a 2-stackification of $\calBBS_\triv$, the morphism $\DiDo_\triv$ extends to an equivalence $\DiDo\colon \calBBS \to \DC_1^3$ of 2-stacks.
\end{proof}


Theorem \ref{t:dd-dc} has some immediate consequences.  For any manifold $M$, the DD-functor gives an equivalence of bicategories $\calBBS(M) \cong \DC^3_1(M)$.  Hence,

\begin{corollary} \labell{c:iso class}
Let $M$ be a manifold. There is a one-to-one correspondence between Morita isomorphism classes of DD-bundles over $M$ and $H^3(M;\ZZ)$.
\end{corollary}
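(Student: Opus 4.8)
The plan is to read the statement directly off the equivalence of bicategories supplied by Theorem \ref{t:dd-dc}. First I would invoke the general principle that an equivalence of bicategories induces a bijection between the sets of isomorphism classes of objects on either side. Applying this to the equivalence $\calBBS(M)\cong\DC_1^3(M)$ furnished (fibrewise) by the DD-functor $\DiDo$ from Theorem \ref{t:dd-dc} yields a bijection between isomorphism classes of objects of $\calBBS(M)$ and isomorphism classes of objects of $\DC_1^3(M)$.

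Next I would unwind what ``isomorphism class of object'' means on each side. In $\calBBS(M)$ the objects are DD-bundles and the (coherently invertible) $1$-arrows are precisely Morita isomorphisms, so isomorphism classes of objects are exactly Morita isomorphism classes of DD-bundles over $M$. On the other side, $\DC_1^3(M)=\calH^3(\opDC_1^\bullet(M))$, and by Remark \ref{r:basicprops} the isomorphism classes of objects in $\calH^3(A^*)$ are in one-to-one correspondence with $H^3(A^*)$; hence isomorphism classes in $\DC_1^3(M)$ correspond to $H^3(\opDC_1^*(M))$.

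Finally I would invoke Proposition \ref{p:degree3coh}\,(1) with $s=1$, which gives $H^3(\opDC_1^*(M))\cong H^3(M;\ZZ)$. Composing the three bijections produces the desired one-to-one correspondence between Morita isomorphism classes of DD-bundles over $M$ and $H^3(M;\ZZ)$. There is no genuine obstacle here, since the substantive content has already been packaged into Theorem \ref{t:dd-dc}, Remark \ref{r:basicprops}, and Proposition \ref{p:degree3coh}; the only points requiring a moment's care are verifying that an equivalence of bicategories indeed descends to a bijection on isomorphism classes of objects, and confirming that object-isomorphism on the DD side is exactly Morita isomorphism.
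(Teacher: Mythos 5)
Your proposal is correct and matches the paper's own argument: the paper likewise reads the corollary off the fibrewise equivalence $\calBBS(M)\cong\DC_1^3(M)$ from Theorem \ref{t:dd-dc}, identifies isomorphism classes of objects in $\DC_1^3(M)$ with $H^3(\opDC_1^\bullet(M))$ via Remark \ref{r:basicprops}, and concludes with Proposition \ref{p:degree3coh}. Your write-up simply makes explicit the (standard) facts that bicategory equivalences induce bijections on isomorphism classes and that object-isomorphism in $\calBBS(M)$ is Morita isomorphism, which the paper leaves implicit.
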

\begin{proof}
Isomorphism classes of objects in $\DC^3_1(M)$ (see Remark \ref{r:basicprops}) are in one-to-one correspondence with $H^3(\opDC_1^\bullet(M)) \cong H^3(M;\ZZ)$, by Proposition \ref{p:degree3coh}.
\end{proof}

More generally, for a Lie groupoid $\Gamma_1 \toto \Gamma_0$, the DD-functor gives an equivalence of bicategories $\calBBS(\Gamma_\bullet) \cong \DC^3_1(\Gamma_\bullet)$.  Hence,

\begin{corollary} \labell{c:iso class gpd}
Let $\Gamma_1 \toto \Gamma_0$ be a proper Lie groupoid.  There is a one-to-one correspondence between Morita isomorphism classes of $\Gamma_\bullet$-equivariant DD-bundles and $H^3(\Gamma_\bullet;\ZZ)$.
\end{corollary}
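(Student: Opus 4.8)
The plan is to run the same chain of identifications used in Corollary \ref{c:iso class}, but with the manifold $M$ replaced throughout by the simplicial manifold $\Gamma_\bullet$. First I would invoke the equivalence of $2$-stacks from Theorem \ref{t:dd-dc}, which (as noted just before the statement) automatically induces an equivalence of bicategories $\calBBS(\Gamma_\bullet) \cong \DC_1^3(\Gamma_\bullet)$ between the corresponding bicategories of $\Gamma_\bullet$-equivariant objects. Since any equivalence of bicategories induces a bijection on isomorphism classes of objects, Morita isomorphism classes of $\Gamma_\bullet$-equivariant DD-bundles are thereby placed in bijection with isomorphism classes of objects in $\DC_1^3(\Gamma_\bullet) = \calH^3(\opDC_1^*(\Gamma_\bullet))$.

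Next I would reduce from this bicategory of equivariant objects to the cohomology of a single cochain complex. To apply Proposition \ref{p:equiv doublecpx} I first need the hypothesis $H^0(\opDC_1^*(M))=0$ for every $M$, which is a quick direct check: in degree zero the differential sends $(c,0,0)$ to $(dc,-c,0)$, so its kernel forces $c=0$ and hence $H^0(\opDC_1^*(M))=0$. Proposition \ref{p:equiv doublecpx} then supplies an equivalence of bicategories $\calH^3(\opDC_1^*(\Gamma_\bullet)) \cong \calH^3\bigl((\opDC_1^*(\Gamma_\bullet))_\mathrm{tot}\bigr)$, and Remark \ref{r:basicprops} identifies the isomorphism classes of objects of the right-hand side with the cohomology group $H^3\bigl((\opDC_1^*(\Gamma_\bullet))_\mathrm{tot}\bigr)$.

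Finally I would identify this total cohomology group with integral groupoid cohomology using Proposition \ref{p:degree3coh}(2), which states precisely that for a proper Lie groupoid the projection onto integer cochains induces an isomorphism $H^3\bigl((\opDC_1^*(\Gamma_\bullet))_\mathrm{tot}\bigr) \cong H^3(\Gamma_\bullet;\ZZ)$. Composing the three bijections then yields the asserted one-to-one correspondence.

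The genuinely substantive input is Proposition \ref{p:degree3coh}(2), and this is the only place where the properness hypothesis enters the argument: properness is what guarantees (via averaging against a cutoff/Haar system, so that groupoid cohomology with real coefficients vanishes in positive simplicial degree) that the de~Rham and real-coefficient columns of the differential cochain double complex contribute nothing beyond the integral part in total degree $3$, making the projection to integer cochains an isomorphism there. Everything else is a formal concatenation of equivalences already established, so I expect no real obstacle beyond the trivial $H^0$-vanishing check above and the bookkeeping needed to confirm that each cited equivalence of bicategories indeed respects isomorphism classes of objects.
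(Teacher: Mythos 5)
Your proposal is correct and follows essentially the same route as the paper's proof: the equivalence $\calBBS(\Gamma_\bullet)\cong\DC_1^3(\Gamma_\bullet)$ induced by Theorem \ref{t:dd-dc}, then Proposition \ref{p:equiv doublecpx} together with Remark \ref{r:basicprops} to pass to $H^3\bigl((\opDC_1^*(\Gamma_\bullet))_\mathrm{tot}\bigr)$, and finally Proposition \ref{p:degree3coh}(2) for the identification with $H^3(\Gamma_\bullet;\ZZ)$. Your explicit verification of the hypothesis $H^0(\opDC_1^*(M))=0$ is a small but welcome addition that the paper leaves implicit.
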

\begin{proof}
Isomorphism classes of objects in $\DC^3_1(\Gamma_\bullet)=\calH^3(\opDC_1^*(\Gamma_\bullet))$ (see Remark \ref{r:basicprops}) are in one-to-one correspondence with $H^3((\opDC_1^*(\Gamma_\bullet))_\mathrm{tot})\cong H^3(\Gamma_\bullet;\ZZ)$, by  Proposition~\ref{p:equiv doublecpx} and Proposition \ref{p:degree3coh}.
\end{proof}

\begin{remark}\labell{r:actual dd class}
Composing the DD-functor with $\pr$ from Proposition~\ref{p:degree3coh}, for any DD-bundle we obtain the classical DD-class in $H^3(M;\ZZ)$ (or $H^3(\Gamma_\bullet;\ZZ)$ in the equivariant case).
\end{remark}

In the case that $\Gamma_1 \toto \Gamma_0$ is an action groupoid corresponding to an action of a Lie group on a manifold, we obtain:

\begin{corollary} \labell{c:iso class act}
Let $G$ be a Lie group acting smoothly and properly on a manifold $M$.  There is a one-to-one correspondence between Morita isomorphism classes of $(G \times M \toto M)$-equivariant DD-bundles and (the simplicial model for) $G$-equivariant cohomology $H^3_G(M;\ZZ)$.
\end{corollary}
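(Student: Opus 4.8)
The plan is to obtain Corollary \ref{c:iso class act} as the specialization of Corollary \ref{c:iso class gpd} to the action groupoid $\Gamma_1 = G\times M \toto M = \Gamma_0$ of the given $G$-action, followed by the identification of the resulting groupoid cohomology with the simplicial model for equivariant cohomology.

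First I would verify that the action groupoid $G\times M \toto M$ is a proper Lie groupoid. Recall that a Lie groupoid $\Gamma_1 \toto \Gamma_0$ is proper precisely when the anchor map $(s,t)\colon \Gamma_1 \to \Gamma_0\times\Gamma_0$ is proper. For the action groupoid, with source $s(g,x)=x$ and target $t(g,x)=gx$, this map is $(g,x)\mapsto (x,gx)$, whose properness is exactly the hypothesis that the $G$-action on $M$ is proper. Hence the action groupoid is a proper Lie groupoid, and Corollary \ref{c:iso class gpd} applies, furnishing a one-to-one correspondence between Morita isomorphism classes of $(G\times M \toto M)$-equivariant DD-bundles and $H^3(\Gamma_\bullet;\ZZ)$, the cohomology of the total complex of the double complex $C^*(\Gamma_\bullet;\ZZ)$.

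It then remains to identify $H^3(\Gamma_\bullet;\ZZ)$ with $H^3_G(M;\ZZ)$. The nerve of the action groupoid has $\Gamma_q = G^q\times M$, with the bar-type face maps recorded in the preliminaries, so the associated simplicial manifold $\Gamma_\bullet$ is exactly the simplicial model of the homotopy quotient (the bar construction of the action). The cohomology $H^*(\Gamma_\bullet;\ZZ)$ of the corresponding total complex is, by definition, the simplicial model for $G$-equivariant cohomology $H^*_G(M;\ZZ)$ referenced in the parenthetical of the statement. Composing this identification with the bijection from Corollary \ref{c:iso class gpd} yields the asserted correspondence.

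Since the essential content is already packaged in Corollary \ref{c:iso class gpd}, I do not expect a genuine obstacle here; the only two points requiring care are the (standard) equivalence between properness of the $G$-action and properness of the action groupoid, and the (essentially definitional) identification of the groupoid cohomology of the nerve $G^\bullet\times M$ with the simplicial model for $H^*_G(M;\ZZ)$.
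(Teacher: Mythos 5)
Your proposal is correct and matches the paper's (implicit) argument exactly: the paper states Corollary \ref{c:iso class act} as an immediate specialization of Corollary \ref{c:iso class gpd} to the action groupoid $G\times M \toto M$, where properness of the $G$-action gives properness of the groupoid and the nerve $G^\bullet\times M$ is definitionally the simplicial model computing $H^3_G(M;\ZZ)$. Your two ``points requiring care'' are precisely the details the paper leaves unstated, and your handling of both is correct.
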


\begin{remark} \labell{r:equivariance2}
For compact Lie groups $G$, it is well known that (ordinary) $G$-equivariant DD-bundles over $M$ are classified by $H^3_G(M;\ZZ)$.  Corollary \ref{c:iso class act} implies that every $(G\times M \toto M)$-equivariant bundle is Morita isomorphic to a genuine $G$-equivariant DD-bundle.
\end{remark}

\subsection*{Differential Characters, Bundle Gerbes, and Connections}\labell{s:comparisons}

In this section, we briefly comment on the equivalence of Dixmier-Douady bundles and $\SS^1$-bundle gerbes, and provide refinements of Theorem \ref{t:dd-dc} for bundle gerbes with connective structures.  Bundle gerbes were defined by Murray \cite{murray2000bundle} (see also \cite{chatterjee1998construction,hitchin2001lectures,hitchin2003communications} and \cite{stevenson2000geometry}  for details).  In \cite{nikolaus2011equivariance}, the authors show that bundle gerbes form a 2-stack, namely the 2-stackification of the pre-stack $\mathrm{Grb}_\triv$, which by definition is the presheaf of bicategories with a single object in $\mathrm{Grb}_\triv(M)$ and morphisms the category  $\mathcal{BS}^1(M)$ of principal $\SS^1$-bundles over $M$.  In particular, since the prestacks $\mathrm{Grb}_\triv$ and $\calBBS_\triv$ are equivalent, we immediately obtain the equivalence between $\SS^1$-bundle gerbes $\mathrm{Grb}$ and DD-bundles $\calBBS$.  
This gives part (1) of the Theorem below:

\begin{theorem} \labell{t:dcbundlegrb}
\mbox{}
\begin{enumerate}
\item
The 2-stack  $\mathrm{Grb}$ of $\SS^1$-bundle gerbes is equivalent to the 2-stack $\DC^3_1$ of differential $3$-cocycles of height 1.
\item
The 2-stack  $\mathrm{Grb}^\nabla$ of $\SS^1$-bundle gerbes with connection (but no specified curving) is equivalent to the 2-stack $\DC^3_2$ of differential $3$-cocycles of height 2.
\end{enumerate}
\end{theorem}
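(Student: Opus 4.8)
The plan is to run the argument of Theorem~\ref{t:dd-dc} twice, once without and once with connections, comparing a suitable trivial prestack of bundle gerbes to $\DC_s^3$. Part~(1) is exactly the content of the paragraph preceding the statement: the prestack $\mathrm{Grb}_\triv$ has a single object whose morphism category is $\mathcal{BS}^1(M)$, while $\calBBS_\triv$ (Definition~\ref{trivial BS}) has the equivalent morphism category $\Hom_{\calBBS(M)}(\calA_0,\calA_0)\cong\mathcal{BS}^1(M)$ by Lemma~\ref{l:line bdle}, so the two trivial prestacks are equivalent. Since $\mathrm{Grb}$ and $\calBBS$ are their respective stackifications, uniqueness of stackification together with Theorem~\ref{t:dd-dc} yields $\mathrm{Grb}\cong\calBBS\cong\DC_1^3$.

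For part~(2) I would first recall from \cite{nikolaus2011equivariance} that $\mathrm{Grb}^\nabla$ is the 2-stackification of a prestack $\mathrm{Grb}^\nabla_\triv$ with a single object---the trivial bundle gerbe with trivial connection---whose morphism category is the category $\mathcal{BS}^1_\nabla(M)$ of principal $\SS^1$-bundles with connection. I would then define a morphism of prestacks $\mathrm{Grb}^\nabla_\triv\to\DC_2^3$ exactly as in the proof of Theorem~\ref{t:dd-dc}: send the single object to $\underline 0=(0,0,0)$ and, on morphism categories, use the height-$2$ analogue of the equivalence of \cite{lerman2008differential}, namely $\mathcal{BS}^1_\nabla(M)\cong\Hom_{\DC_2^3(M)}(\underline 0,\underline 0)=\DC_2^2(M)$. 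Here I am invoking Remark~\ref{r:basicprops} to identify the $\Hom$-category of $\underline 0$ with the cocycle category $\calH^2(\opDC_2^*(M))=\DC_2^2(M)$, together with the fact recalled in the Introduction that height-$2$ differential $2$-cocycles model $\SS^1$-bundles with connection.

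It then remains to verify the two conditions defining a stackification. Fibrewise full faithfulness is immediate, since the assignment on morphism categories is by construction an equivalence. For the local-triviality condition, I would observe that by Proposition~\ref{p:degree3coh}(1) isomorphism classes of objects in $\DC_2^3(M)$ are classified by $H^3(\opDC_2^*(M))\cong H^3(M;\ZZ)$; as this group vanishes over any contractible open set, every object of $\DC_2^3(M)$ restricts to something isomorphic to $\underline 0$ on a contractible cover, and is therefore locally isomorphic to an object in the image of $\mathrm{Grb}^\nabla_\triv$. Since $\DC_2^3$ is a 2-stack by Theorem~\ref{t:dc}, these two facts show it is a 2-stackification of $\mathrm{Grb}^\nabla_\triv$ (see \cite[Section~1.10]{breen1994classification}); by uniqueness of stackification up to equivalence, $\mathrm{Grb}^\nabla\cong\DC_2^3$.

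The hard part will be the bookkeeping at the level of the trivial prestack rather than any new geometry: one must confirm that \cite{nikolaus2011equivariance} really presents $\mathrm{Grb}^\nabla$ as the stackification of the single-object prestack whose $1$-morphisms carry exactly the connection data of $\SS^1$-bundles with connection, and that the height-$2$ equivalence $\mathcal{BS}^1_\nabla\cong\DC_2^2$ of \cite{lerman2008differential} is natural in $M$ and compatible with composition, so that it genuinely assembles into a morphism of prestacks and not merely a fibrewise equivalence. Once these compatibilities are in place, the argument is formally identical to that of Theorem~\ref{t:dd-dc}.
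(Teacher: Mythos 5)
Your proposal is correct and follows essentially the same route as the paper: part (1) via the equivalence of the trivial prestacks $\mathrm{Grb}_\triv \cong \calBBS_\triv$ combined with Theorem \ref{t:dd-dc}, and part (2) by rerunning the stackification argument of Theorem \ref{t:dd-dc} with the single-object prestack $\mathrm{Grb}^\nabla_\triv$ (whose morphism category the paper denotes $\mathcal{DBS}^1(M)$) and the height-2 equivalence $\mathcal{DBS}^1 \cong \DC_2^2$ of \cite{lerman2008differential}, using Proposition \ref{p:degree3coh} for local triviality. The details you spell out (fibrewise full faithfulness, local isomorphism to $\underline{0}$ over contractible opens, uniqueness of stackification) are exactly the ones the paper compresses into the phrase ``the rest of the argument is the same as in the proof of Theorem \ref{t:dd-dc}.''
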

\begin{proof}[Proof of (2)]
 The 2-stack $\mathrm{Grb}^\nabla$ is the 2-stackification of the presheaf of bicategories $\mathrm{Grb}^\nabla_\triv$, which by definition is the presheaf of bicategories with a single object in $\mathrm{Grb}^\nabla_\triv(M)$ and morphisms the category  $\mathcal{DBS}^1(M)$ of principal $\SS^1$-bundles over $M$ with connection.  The rest of the argument is the same as in the proof of Theorem \ref{t:dd-dc}, using instead the equivalence $\mathcal{DBS}^1 \cong \DC_2^2$ from \cite{lerman2008differential}.
\end{proof}

Similar to the corollaries following Theorem \ref{t:dd-dc}, we obtain the following:

\begin{corollary} \labell{c:iso class grbcon}
Let $M$ be a manifold. There is a one-to-one correspondence between (stable) isomorphism classes of $\SS^1$-bundle gerbes over $M$ (with or without connection) and $H^3(M;\ZZ)$.
\end{corollary}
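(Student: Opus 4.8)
The plan is to reprise the short chain of bijections used in the proof of Corollary~\ref{c:iso class}, substituting $\SS^1$-bundle gerbes for DD-bundles and invoking Theorem~\ref{t:dcbundlegrb} in place of Theorem~\ref{t:dd-dc}. The one point worth stating explicitly at the outset is that \emph{stable} isomorphism of bundle gerbes is, by construction, exactly the relation of isomorphism between objects in the bicategory $\mathrm{Grb}(M)$ (respectively $\mathrm{Grb}^\nabla(M)$); thus counting stable isomorphism classes is the same as counting isomorphism classes of objects in these bicategories.

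First I would dispose of the connectionless case. By Theorem~\ref{t:dcbundlegrb}(1) there is an equivalence of bicategories $\mathrm{Grb}(M) \cong \DC_1^3(M)$, and an equivalence of bicategories induces a bijection on isomorphism classes of objects. Hence stable isomorphism classes of $\SS^1$-bundle gerbes over $M$ are in bijection with isomorphism classes of objects of $\DC_1^3(M) = \calH^3(\opDC_1^*(M))$. By Remark~\ref{r:basicprops} these are in one-to-one correspondence with $H^3(\opDC_1^*(M))$, and by Proposition~\ref{p:degree3coh}(1) applied with $s=1$ we have $H^3(\opDC_1^*(M)) \cong H^3(M;\ZZ)$. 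Composing gives the desired bijection.

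The case of bundle gerbes with connection is handled by the identical chain of bijections, now starting from the equivalence $\mathrm{Grb}^\nabla(M) \cong \DC_2^3(M)$ of Theorem~\ref{t:dcbundlegrb}(2) and closing with Proposition~\ref{p:degree3coh}(1) applied with $s=2$, which again returns $H^3(\opDC_2^*(M)) \cong H^3(M;\ZZ)$. I do not anticipate any real obstacle, since every step is a formal consequence of results already in hand; the only subtlety is conceptual. One might expect the connection to refine the invariant, but because $\mathrm{Grb}^\nabla$ corresponds to \emph{height} $s=2$---in which no curving is specified---Proposition~\ref{p:degree3coh}(1) still applies and the classifying group remains the integral class $H^3(M;\ZZ)$. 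A genuine refinement occurs only at height $s=3$ (connection together with curving), where the relevant group is that of degree-$3$ differential characters, $H^3(\opDC_3^*(M))$, rather than $H^3(M;\ZZ)$.
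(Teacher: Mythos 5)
Your proposal is correct and follows essentially the same route the paper intends: the paper derives this corollary exactly as you do, by combining the equivalences $\mathrm{Grb}(M)\cong\DC_1^3(M)$ and $\mathrm{Grb}^\nabla(M)\cong\DC_2^3(M)$ from Theorem~\ref{t:dcbundlegrb} with Remark~\ref{r:basicprops} and Proposition~\ref{p:degree3coh}(1) (which indeed covers both heights $s=1,2$), mirroring the proof of Corollary~\ref{c:iso class}. Your closing observation---that the invariant only refines at height $s=3$, where curving enters---is also exactly the point the paper makes via Theorem~\ref{t:dcbundlegerbwconnex}.
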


\begin{corollary} \labell{c:iso class grb equiv}
Let $\Gamma_1\toto \Gamma_0$ be a proper Lie groupoid.  There is a one-to-one correspondence between (stable) isomorphism classes of $\Gamma_\bullet$-equivariant $\SS^1$-bundle gerbes without connection and $H^3(\Gamma_\bullet;\ZZ)$.  
\end{corollary}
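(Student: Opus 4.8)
The plan is to mirror the proof of Corollary \ref{c:iso class gpd} essentially verbatim, substituting the bundle gerbe model for the DD-bundle model. The essential input is Theorem \ref{t:dcbundlegrb}(1), which provides an equivalence of 2-stacks $\mathrm{Grb} \cong \DC_1^3$. Because an equivalence of 2-stacks induces an equivalence on the associated bicategories of $\Gamma_\bullet$-equivariant objects (the same principle that yields $\calBBS(\Gamma_\bullet) \cong \DC_1^3(\Gamma_\bullet)$ in the discussion preceding Corollary \ref{c:iso class gpd}), I would first deduce an equivalence of bicategories $\mathrm{Grb}(\Gamma_\bullet) \cong \DC_1^3(\Gamma_\bullet)$.

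Next I would identify the two sides of the claimed bijection with isomorphism classes of objects in these bicategories. By definition, stable isomorphism classes of $\Gamma_\bullet$-equivariant $\SS^1$-bundle gerbes are precisely the isomorphism classes of objects in $\mathrm{Grb}(\Gamma_\bullet)$, and under the equivalence above these correspond to isomorphism classes of objects in $\DC_1^3(\Gamma_\bullet) = \calH^3(\opDC_1^*(\Gamma_\bullet))$. To compute the latter, I would invoke Proposition \ref{p:equiv doublecpx}: since $H^0(\opDC_1^*(M)) = 0$ for every manifold $M$, the bicategory $\calH^3(\opDC_1^*(\Gamma_\bullet))$ is equivalent to the cocycle 2-category $\calH^3((\opDC_1^*(\Gamma_\bullet))_\mathrm{tot})$, whose isomorphism classes of objects are $H^3((\opDC_1^*(\Gamma_\bullet))_\mathrm{tot})$ by Remark \ref{r:basicprops}. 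Finally, Proposition \ref{p:degree3coh}(2) gives $H^3((\opDC_1^*(\Gamma_\bullet))_\mathrm{tot}) \cong H^3(\Gamma_\bullet;\ZZ)$, completing the chain of bijections.

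Since every step is an application of a result already in hand, there is no genuine obstacle; this corollary is formally the same statement as Corollary \ref{c:iso class gpd} with the equivalent 2-stack $\mathrm{Grb}$ in place of $\calBBS$. The only points meriting care are the bookkeeping that matches equivariant stable isomorphism of bundle gerbes with isomorphism of objects in $\mathrm{Grb}(\Gamma_\bullet)$, and the (already used) observation that $\opDC_1^*$ satisfies the vanishing hypothesis $H^0 = 0$ required by Proposition \ref{p:equiv doublecpx}. Properness of $\Gamma_\bullet$ enters only through Proposition \ref{p:degree3coh}(2), where it guarantees the identification of the total complex cohomology with equivariant integral cohomology.
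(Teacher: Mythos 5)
Your proposal is correct and is essentially the paper's own (implicit) argument: the paper derives this corollary ``similar to the corollaries following Theorem \ref{t:dd-dc},'' which is precisely your route of transporting the proof of Corollary \ref{c:iso class gpd} through the equivalence $\mathrm{Grb}\cong\DC_1^3$ from Theorem \ref{t:dcbundlegrb}(1), then computing isomorphism classes via Proposition \ref{p:equiv doublecpx} and Proposition \ref{p:degree3coh}(2), where properness enters. Your bookkeeping points (equivariant stable isomorphism as isomorphism in $\mathrm{Grb}(\Gamma_\bullet)$, and the vanishing $H^0(\opDC_1^*(M))=0$ needed for Proposition \ref{p:equiv doublecpx}) are exactly the right ones and check out.
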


\begin{corollary} \labell{c:iso class grbcon equiv}
Let $\Gamma_1\toto \Gamma_0$ be a Lie groupoid.  There is a one-to-one correspondence between (stable) isomorphism classes of $\Gamma_\bullet$-equivariant $\SS^1$-bundle gerbes with connection and $H^3(\opDC_2^*(\Gamma_\bullet)_{\mathrm{tot}})$.  
\end{corollary}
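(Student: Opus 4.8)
The plan is to deduce this corollary from the equivalence of 2-stacks in Theorem \ref{t:dcbundlegrb}(2) in exactly the way Corollary \ref{c:iso class gpd} follows from Theorem \ref{t:dd-dc}, with the only arithmetic input being the vanishing of a degree-zero cohomology group.

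First I would invoke Theorem \ref{t:dcbundlegrb}(2), which gives an equivalence of 2-stacks $\mathrm{Grb}^\nabla \cong \DC_2^3$. Because a morphism of presheaves of bicategories that is fibrewise an equivalence induces an equivalence on the bicategories of $\Gamma_\bullet$-equivariant objects (the construction of Definition \ref{d:eqobj} is functorial in the presheaf, as in \cite{nikolaus2011equivariance}), this yields an equivalence of bicategories $\mathrm{Grb}^\nabla(\Gamma_\bullet) \cong \DC_2^3(\Gamma_\bullet)$ for the given Lie groupoid. In particular, stable isomorphism classes of $\Gamma_\bullet$-equivariant $\SS^1$-bundle gerbes with connection are in bijection with isomorphism classes of objects in $\DC_2^3(\Gamma_\bullet) = \calH^3(\opDC_2^*(\Gamma_\bullet))$.

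Next I would identify those isomorphism classes with the desired cohomology group. By Proposition \ref{p:equiv doublecpx} applied to the presheaf $\opDC_2^*$, there is an equivalence of bicategories $\calH^3(\opDC_2^*(\Gamma_\bullet)) \cong \calH^3((\opDC_2^*(\Gamma_\bullet))_\mathrm{tot})$, provided $H^0(\opDC_2^*(M)) = 0$ for every manifold $M$. This hypothesis is verified by a direct computation: one has $\opDC_2^0(M) \cong C^0(M;\ZZ)$ (the form component vanishes because $0 < 2$, and the $\RR$-cochain component sits in degree $-1$), and the differential acts by $d(c,0,0) = (dc, -c, 0)$, whose kernel is trivial; hence $H^0(\opDC_2^*(M)) = 0$. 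Remark \ref{r:basicprops} then identifies isomorphism classes of objects in $\calH^3((\opDC_2^*(\Gamma_\bullet))_\mathrm{tot})$ with $H^3((\opDC_2^*(\Gamma_\bullet))_\mathrm{tot})$, and composing all the bijections gives the stated correspondence.

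I do not expect any serious obstacle: the argument is entirely formal once Theorem \ref{t:dcbundlegrb}(2) is in hand, and the sole computational step---the vanishing of $H^0(\opDC_2^*(M))$---is immediate. It is worth noting why properness of $\Gamma_\bullet$ is not required here, in contrast to Corollaries \ref{c:iso class gpd} and \ref{c:iso class grb equiv}: the target is the cohomology of the differential complex $(\opDC_2^*(\Gamma_\bullet))_\mathrm{tot}$ itself, so Proposition \ref{p:degree3coh} (which identifies such cohomology with integral equivariant cohomology only for proper groupoids) is never invoked.
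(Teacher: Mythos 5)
Your proof is correct and follows exactly the route the paper intends: the paper states this corollary without a separate proof, as the argument is the same as for Corollaries \ref{c:iso class} and \ref{c:iso class gpd} --- pass from the 2-stack equivalence $\mathrm{Grb}^\nabla \cong \DC_2^3$ of Theorem \ref{t:dcbundlegrb}(2) to equivariant objects, then apply Proposition \ref{p:equiv doublecpx} and Remark \ref{r:basicprops}, with no appeal to Proposition \ref{p:degree3coh}. Your explicit verification that $H^0(\opDC_2^*(M))=0$ and your observation that this is precisely why properness can be dropped are both accurate and consistent with the paper's statement.
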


In fact, the argument used to prove the equivalence of Theorem \ref{t:dd-dc} (and hence also Theorem \ref{t:dcbundlegrb}) can be used to show an equivalence between differential characters of degree 3 and bundle gerbes with connection and curving.  

\begin{theorem} \labell{t:dcbundlegerbwconnex}
The 2-stack of $\SS^1$-bundle gerbes with connection and curving $\mathrm{Grb}^{\nabla,B}$ is equivalent to the 2-stack  $\DC_3^3$ of differential characters of degree 3.
\end{theorem}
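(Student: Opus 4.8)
The plan is to run the stackification argument of Theorem \ref{t:dd-dc} with the differential-cocycle complex of height $3$ in place of height $1$, the only essential new feature being that the relevant generating prestack is no longer single-object. By \cite{nikolaus2011equivariance}, $\mathrm{Grb}^{\nabla,B}$ is the stackification of the prestack $\mathrm{Grb}^{\nabla,B}_\triv$ of trivial bundle gerbes with curving: over a manifold $M$ its objects are the trivial gerbes $\mathcal{I}_\rho$ indexed by $2$-forms $\rho\in\Omega^2(M)$, the Hom-category $\Hom(\mathcal{I}_{\rho_1},\mathcal{I}_{\rho_2})$ is the category of principal $\SS^1$-bundles with connection of curvature $\rho_2-\rho_1$, and composition of $1$-morphisms is tensor product of bundles. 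Since $\DC_3^3$ is a $2$-stack by Theorem \ref{t:dc}, it suffices to exhibit $\DC_3^3$ as a second stackification of $\mathrm{Grb}^{\nabla,B}_\triv$ and invoke the essential uniqueness of stackifications (\cite[Section 1.10]{breen1994classification}), exactly as at the end of the proof of Theorem \ref{t:dd-dc}.

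First I would define a morphism of prestacks $F\colon \mathrm{Grb}^{\nabla,B}_\triv\to\DC_3^3$. On objects, set $F(\mathcal{I}_\rho)=(0,\rho,d\rho)$, using the inclusion $\Omega^2(M)\subset C^2(M;\RR)$; this lies in $\opDC_3^3(M)$ and is a cocycle, since $d(0,\rho,d\rho)=(0,\,d\rho-0-d\rho,\,0)=0$. On Hom-categories I would use Lerman's equivalence $\mathrm{Ch}\colon\mathcal{DBS}^1\cong\DC_2^2$ of \cite{lerman2008differential}. The point is that a $1$-arrow $(0,\rho_1,d\rho_1)\to(0,\rho_2,d\rho_2)$ in $\DC_3^3(M)$ is a cochain $(b,k,0)\in\opDC_3^2(M)$ with $db=0$ and $\rho_2-\rho_1=b+dk$ (the $\Omega^3$-components forcing $d\rho_1=d\rho_2$, which is automatic), and such pairs $(b,k)$ together with their $2$-arrows are precisely the objects and morphisms of the full subcategory of $\DC_2^2(M)$ on objects of curvature $\rho_2-\rho_1$. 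Thus $F$ restricts on each Hom-category to $\mathrm{Ch}$ with the curvature pinned to $\rho_2-\rho_1$, hence is an equivalence; compatibility with composition holds because $\mathrm{Ch}$ sends tensor product of bundles to addition of cochains. This gives that $F$ is fibrewise fully faithful.

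It then remains to verify local essential surjectivity. Given an object $(c,h,\omega)$ of $\DC_3^3(M)$ (so $dc=0$, $\omega=c+dh$, $d\omega=0$), I would restrict to a contractible open set $U$, use the Poincar\'e lemma to write $\omega=d\beta$ with $\beta\in\Omega^2(U)$, and use $H^3(U;\ZZ)=0$ to write $c=db$ with $b\in C^2(U;\ZZ)$. Then $d(\beta-b-h)=\omega-c-dh=0$, so $\beta-b-h=dk$ for some $k\in C^1(U;\RR)$, and $(b,k,0)\in\opDC_3^2(U)$ is a $1$-arrow exhibiting $(c,h,\omega)|_U\cong(0,\beta,d\beta)=F(\mathcal{I}_\beta)$. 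Hence every object of $\DC_3^3(M)$ is locally isomorphic to one in the image of $F$, so $\DC_3^3$ is a stackification of $\mathrm{Grb}^{\nabla,B}_\triv$, and combined with \cite{nikolaus2011equivariance} we conclude $\mathrm{Grb}^{\nabla,B}\cong\DC_3^3$.

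The step I expect to be the main obstacle is not the differential geometry, which reduces to the degree-$2$ equivalence of \cite{lerman2008differential} plus the Poincar\'e lemma, but the bookkeeping forced by the multi-object generating prestack: one must check that $F$ is genuinely pseudo-natural across the $2$-form-indexed objects (respecting pullbacks together with the composition and coherence data of $\mathrm{Grb}^{\nabla,B}_\triv$), and that the identification of $\Hom_{\DC_3^3(M)}$-categories with fixed-curvature subcategories of $\DC_2^2(M)$ is natural in $M$. Once these naturality statements are in place, the formal conclusion is identical to that of Theorem \ref{t:dd-dc}.
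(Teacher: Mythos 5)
Your proposal is correct and follows essentially the same route as the paper: both exhibit $\DC_3^3$ as the stackification of the trivial prestack $\mathrm{Grb}^{\nabla,B}_\triv$ via $\rho\mapsto(0,\rho,d\rho)$, use the Lerman--Malkin equivalence $\mathcal{DBS}^1\cong\DC_2^2$ to identify Hom-categories (with the curvature pinned to $\rho_2-\rho_1$), and conclude by uniqueness of stackifications. If anything, your direct Poincar\'e-lemma verification of local essential surjectivity is more careful than the paper's parenthetical appeal to the vanishing of $H^3(\opDC_3^*(U))$ on contractible $U$ --- which is not literally true, since that group is isomorphic to $\Omega^3_{\mathrm{closed}}(U)$; the fact that matters, and the one you actually prove, is that every class on such $U$ is represented by a cocycle of the form $(0,\beta,d\beta)$.
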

\begin{proof}
Recall from \cite{nikolaus2011equivariance}, that the 2-stack of bundle gerbes with connection and curving, denoted $\mathrm{Grb}^{\nabla,B}$ is a stackification of a 2-prestack $\mathrm{Grb}^{\nabla,B}_\triv$, defined as follows.  Objects in $\mathrm{Grb}^{\nabla,B}_\triv(M)$ are 2-forms $\beta \in \Omega^2(M)$; 1-arrows are line bundles over $M$ with connection $(L,\nabla)\colon \beta_1\to \beta_2$ where $\curv(\nabla)=\beta_2-\beta_1$; 2-arrows are connection-preserving bundle maps $(L,\nabla)\to(L',\nabla')$.  

Similar to the proof of Theorem \ref{t:dd-dc}, we define a morphism of 2-prestacks $\mathrm{F}_\triv\colon \mathrm{Grb}_\triv^{\nabla,B} \to \DC_3^3$ as follows. Let $M$ be a manifold.  For an object $\beta \in \Omega^2(M)$ of $\mathrm{Grb}_\triv^{\nabla,B}(M)$, set $\mathrm{F}_\triv(\beta)=(0,\beta,d\beta)$.  On morphism categories, we use the equivalence $\mathrm{DCh}:\mathcal{DBS}^1 \to \DC_2^2$   from \cite[Section 4.4]{lerman2008differential}, between $\SS^1$-bundles with connection and differential characters of degree 2. In particular, send $(L,\nabla)\colon \beta_1\to \beta_2$ to $(c,h,0) \in \opDC_3^2(M)$, where $\mathrm{DCh}(L,\nabla) = (c,h,\beta_2-\beta_1)$, and send $\phi:(L_1, \nabla_1) \to (L_2, \nabla_2)$ to $\mathrm{DCh}(\phi) \in \opDC_2^1(M)=\opDC_3^1(M)$.  

That $F_\triv$ is an equivalence on morphism categories follows from the equivalence $\mathrm{DCh}$.  Indeed, given a morphism $(c,h,0):(0,\beta_1,d\beta_1) \to (0,\beta_2,d\beta_2)$ in $\opDC_3^2(M)$, $(c,h,\beta_2-\beta_1)$ is a cocycle in $\opDC_2^2(M)$ (i.e.\ an object in $\DC_2^2(M)$); therefore, there exists a line bundle with connection  $(L,\nabla)$ so that $\mathrm{DCh}(L,\nabla)=(c',h',\beta_2-\beta_1)$ and a morphism $(b,g,0):(c',h',\beta_2-\beta_1) \to (c,h,\beta_2-\beta_1)$ in $\opDC_2^1(M)$, which may be viewed as a 2-arrow in $\DC_3^1(M)$, $(b,g,0):(c,h,0)\Rightarrow (c',h',0)$, and thus $F_\triv$ is essentially surjective on morphism categories.  That it is fully faithful on morphism categories is clear, since this is true for $\mathrm{DCh}$.

Since $\mathrm{F}_\triv$ is fibrewise fully faithful, and every object in $\DC_3^3$ is locally isomorphic to one in the image of $\mathrm{F}_\triv$ (since isomorphism classes of objects in $\DC_3^3(M)$ are classified by $H^3(\opDC_3^*(M))$, and $M$ can be covered by contractible open sets $U$ on which $H^3(\opDC^*_3(U))=0$), then $\DC_3^3$ is a 2-stackification of $\mathrm{Grb}_\triv^\nabla$ (see \cite[Section 1.10]{breen1994classification}).  Since $\mathrm{Grb}^{\nabla,B}$ is a 2-stackification of $\mathrm{Grb}_\triv^{\nabla,B}$, the morphism $\mathrm{F}_\triv$ extends to an equivalence $\mathrm{F}\colon \mathrm{Grb}^{\nabla,B}\to \DC_3^3$ of 2-stacks.
\end{proof}

Similar to above,   we obtain the following corollaries to Theorem~\ref{t:dcbundlegerbwconnex}.

\begin{corollary}\labell{c:iso class diffchar}
Let $M$ be a manifold. There is a one-to-one correspondence between (stable) isomorphism classes of $\SS^1$-bundles gerbes with connection and curving and $H^3(\opDC_3^*(M))$ differential characters of degree 3.
\end{corollary}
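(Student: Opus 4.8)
The plan is to deduce the corollary directly from the equivalence of $2$-stacks established in Theorem \ref{t:dcbundlegerbwconnex}, in exactly the same way that Corollaries \ref{c:iso class} and \ref{c:iso class gpd} follow from Theorem \ref{t:dd-dc}. First I would invoke Theorem \ref{t:dcbundlegerbwconnex} at a fixed manifold $M$: the equivalence of $2$-stacks $\mathrm{Grb}^{\nabla,B}\cong\DC_3^3$ restricts to an equivalence of bicategories $\mathrm{Grb}^{\nabla,B}(M)\cong\DC_3^3(M)$. Since any equivalence of bicategories induces a bijection on sets of isomorphism classes of objects, it then suffices to identify these sets on each side.

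On the gerbe side, isomorphism classes of objects in the bicategory $\mathrm{Grb}^{\nabla,B}(M)$ are, by the construction of this bicategory (whose $1$-arrows are stable isomorphisms and whose $2$-arrows are transformations), precisely the stable isomorphism classes of $\SS^1$-bundle gerbes with connection and curving over $M$. On the cocycle side, since $\DC_3^3(M)=\calH^3(\opDC_3^*(M))$ by definition, Remark \ref{r:basicprops} supplies a canonical bijection between isomorphism classes of objects and the cohomology group $H^3(\opDC_3^*(M))$. By the theorem of Cheeger and Simons recalled in the introduction, $H^3(\opDC_3^*(M))$ is precisely the group of differential characters of degree $3$.

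Composing these two identifications with the bijection coming from the equivalence yields the desired one-to-one correspondence. There is no genuine obstacle here, as the content is entirely carried by Theorem \ref{t:dcbundlegerbwconnex}; the proof is a formal consequence, parallel to the short proofs of the earlier corollaries. The only point meriting a moment's care is confirming that ``stable isomorphism class'' on the gerbe side coincides with ``isomorphism class of object'' in the bicategory $\mathrm{Grb}^{\nabla,B}(M)$, which is immediate from the definition of that bicategory given in \cite{nikolaus2011equivariance}.
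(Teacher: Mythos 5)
Your proof is correct and follows exactly the route the paper intends: the paper gives no separate argument for this corollary, stating only that it follows from Theorem \ref{t:dcbundlegerbwconnex} ``similar to above,'' i.e.\ by the same reasoning as Corollary \ref{c:iso class} --- an equivalence of bicategories induces a bijection on isomorphism classes of objects, and Remark \ref{r:basicprops} identifies those classes on the cocycle side with $H^3(\opDC_3^*(M))$. Your added care about matching stable isomorphism classes with isomorphism classes of objects in $\mathrm{Grb}^{\nabla,B}(M)$ is consistent with how the paper treats that bicategory via \cite{nikolaus2011equivariance}.
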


\begin{corollary} \labell{c:iso class grbconcurv equiv}
Let $\Gamma_1\toto \Gamma_0$ be a Lie groupoid.  There is a one-to-one correspondence between (stable) isomorphism classes of $\Gamma_\bullet$-equivariant $\SS^1$-bundle gerbes with connection and curving and $H^3(\opDC_3^*(\Gamma_\bullet)_{\mathrm{tot}})$ equivariant differential characters of degree 3.
\end{corollary}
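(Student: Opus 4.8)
The plan is to follow the template established in the proofs of Corollaries \ref{c:iso class} and \ref{c:iso class gpd}, transporting the classification through the equivalence of 2-stacks furnished by Theorem \ref{t:dcbundlegerbwconnex}. The key observation is that a morphism---in particular, an equivalence---of presheaves of bicategories induces a corresponding morphism on the bicategories of $\Gamma_\bullet$-equivariant objects, since the construction in Definition \ref{d:eqobj} is functorial in the presheaf. Thus the equivalence $\mathrm{F}\colon \mathrm{Grb}^{\nabla,B} \to \DC_3^3$ from Theorem \ref{t:dcbundlegerbwconnex} yields an equivalence of bicategories $\mathrm{Grb}^{\nabla,B}(\Gamma_\bullet) \cong \DC_3^3(\Gamma_\bullet)$, under which stable isomorphism classes of $\Gamma_\bullet$-equivariant bundle gerbes with connection and curving correspond precisely to isomorphism classes of objects in $\DC_3^3(\Gamma_\bullet) = \calH^3(\opDC_3^*(\Gamma_\bullet))$.

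Next I would reduce the equivariant cocycle $2$-category to the total complex. First I would verify the hypothesis of Proposition \ref{p:equiv doublecpx}, namely that $H^0(\opDC_3^*(M)) = 0$ for every manifold $M$: since $\opDC_3^0(M) = C^0(M;\ZZ)$ (the $\RR$-cochain and form components vanish because $0 < 3$), and the differential acts by $d(c,0,0) = (dc,-c,0)$, the kernel of $d$ in degree $0$ is trivial. Note that, unlike Corollary \ref{c:iso class gpd}, no properness assumption on $\Gamma_1 \toto \Gamma_0$ is needed, as Proposition \ref{p:equiv doublecpx} requires only the vanishing of $H^0$. Applying that proposition gives an equivalence of bicategories $\calH^3(\opDC_3^*(\Gamma_\bullet)) \cong \calH^3((\opDC_3^*(\Gamma_\bullet))_\mathrm{tot})$. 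Finally, by Remark \ref{r:basicprops}, isomorphism classes of objects in the cocycle $2$-category $\calH^3((\opDC_3^*(\Gamma_\bullet))_\mathrm{tot})$ are in one-to-one correspondence with the cohomology group $H^3((\opDC_3^*(\Gamma_\bullet))_\mathrm{tot})$, which is by definition the group of $\Gamma_\bullet$-equivariant differential characters of degree $3$. Chaining these bijections yields the desired classification.

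I do not anticipate a serious obstacle, since the substantive content is already contained in Theorem \ref{t:dcbundlegerbwconnex} and Proposition \ref{p:equiv doublecpx}. The only point requiring care is confirming that the equivalence of $2$-stacks descends to equivariant objects; this is a formal consequence of the functoriality of Definition \ref{d:eqobj} in the presheaf, and was already invoked for the DD-bundle case in the discussion preceding Corollary \ref{c:iso class gpd}. The remaining verifications---the degree-$0$ vanishing and the identification of isomorphism classes with a cohomology group---are routine.
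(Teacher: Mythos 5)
Your proof is correct and follows exactly the route the paper intends (the paper leaves this corollary's proof implicit with ``Similar to above''): transport the classification through the equivariant objects of the equivalence $\mathrm{F}\colon \mathrm{Grb}^{\nabla,B}\to\DC_3^3$ from Theorem \ref{t:dcbundlegerbwconnex}, reduce to the total complex via Proposition \ref{p:equiv doublecpx}, and identify isomorphism classes with $H^3(\opDC_3^*(\Gamma_\bullet)_{\mathrm{tot}})$ via Remark \ref{r:basicprops}. Your explicit verification that $H^0(\opDC_3^*(M))=0$, and your observation that properness is not needed here (it is only required in Corollaries \ref{c:iso class gpd} and \ref{c:iso class grb equiv} to invoke Proposition \ref{p:degree3coh}), are welcome details the paper glosses over.
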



%

\end{document}